\numberwithin{equation}{section}
\theoremstyle{plain}
\newtheorem{theorem}{Theorem}
\newtheorem{cor}{Corollary}
\newtheorem{exam}{Example}
\newtheorem{defn}{Definition}
\newtheorem{prop}{Proposition}
\title{Refinements of the Rogers-Ramanujan identities}
\author{Kathleen O'Hara}
\address{ Virginia Bioinformatics Institute, Virginia Polytechnic Institute
and State University, Blacksburg VA 24061}
\email{kohara1@vbi.vt.edu}
\thanks{ }
\author{Dennis Stanton}
\address{School of Mathematics, University of Minnesota, Minneapolis MN 55455}
\email{stanton@math.umn.edu}
\thanks{Research supported by NSF grant DMS 11-48634}
\begin{document}
\maketitle
\date{October 13, 2014}
\begin{abstract} Refinements of the classical Rogers-Ramanujan 
identities are given in which some parts are weighted. Combinatorial interpretations 
refining MacMahon's results are corollaries.
\end{abstract}

\section{Introduction}

The two Rogers-Ramanujan identities are
\begin{equation}
\label{RR}
\begin{aligned}
1+\sum_{n=1}^\infty \frac{q^{n^2}}{(1-q)(1-q^2)\cdots (1-q^n)}=&
\ \frac{1}{\prod_{k=0}^\infty (1-q^{5k+1})(1-q^{5k+4})},\\
\quad 
1+\sum_{n=1}^\infty \frac{q^{n^2+n}}{(1-q)(1-q^2)\cdots (1-q^n)}=&
\ \frac{1}{\prod_{k=0}^\infty (1-q^{5k+2})(1-q^{5k+3})}.
\end{aligned}
\notag
\end{equation}
These two identities, which in this paper are referred to as the first and the second 
Rogers-Ramanujan identity, have been extensively studied and generalized. 
See \cite{And2} for a selection of proofs  to these identities. 
Andrews \cite{And3} gave a generalization to all odd moduli, and 
Bressoud \cite{Br} to all moduli. 

MacMahon and Schur gave the combinatorial meaning of these identities. 
The restatement of the first identity is 
\begin{prop} The number of partitions of $N$ into parts whose consecutive differences are 
at least two is equal to the number of partitions of $N$ into parts which are congruent to
$1$ or $4$ modulo $5$. 
\end{prop}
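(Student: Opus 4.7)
The plan is to deduce the proposition directly from the first Rogers-Ramanujan identity stated above, by giving a generating-function interpretation of each side. Let $p_1(N)$ be the number of partitions of $N$ into parts whose consecutive differences are at least $2$, and let $p_2(N)$ be the number of partitions of $N$ into parts congruent to $1$ or $4$ modulo $5$. I would show
\[
\sum_{N \geq 0} p_1(N)\, q^N \;=\; 1+\sum_{n=1}^\infty \frac{q^{n^2}}{(1-q)(1-q^2)\cdots (1-q^n)}, \qquad
\sum_{N \geq 0} p_2(N)\, q^N \;=\; \frac{1}{\prod_{k=0}^\infty (1-q^{5k+1})(1-q^{5k+4})},
\]
and then invoke the first Rogers-Ramanujan identity to conclude $p_1(N) = p_2(N)$ for all $N$.

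For the right-hand generating function, the interpretation is routine: expanding each factor $1/(1-q^{5k+1})$ and $1/(1-q^{5k+4})$ as a geometric series and multiplying yields a sum over all multisets of parts $\equiv 1, 4 \pmod{5}$, weighted by total size; collecting the coefficient of $q^N$ gives exactly $p_2(N)$.

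The left-hand generating function requires slightly more work. I would partition $p_1(N)$ according to the number of parts $n$, and then apply the shift $\mu_i := \lambda_i - (2(n-i)+1)$ to any partition $\lambda_1 > \lambda_2 > \cdots > \lambda_n \geq 1$ with $\lambda_i - \lambda_{i+1} \geq 2$. The gap condition together with $\lambda_n \geq 1$ translates precisely into $\mu_1 \geq \mu_2 \geq \cdots \geq \mu_n \geq 0$, so the map is a bijection between such $\lambda$ and ordinary partitions into at most $n$ parts. Since the total shift is $1+3+\cdots+(2n-1) = n^2$, the contribution from partitions with exactly $n$ parts becomes
\[
\frac{q^{n^2}}{(1-q)(1-q^2)\cdots(1-q^n)},
\]
using the standard fact that $1/((1-q)\cdots(1-q^n))$ generates partitions into at most $n$ parts. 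Summing over $n \geq 0$ (the $n=0$ term being $1$ for the empty partition) gives the desired series.

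The main step that needs care is the bijection in the previous paragraph: verifying that $\mu$ is genuinely a weakly decreasing nonnegative sequence, and conversely that the inverse map sends any partition into at most $n$ parts to a strictly decreasing sequence with gaps $\geq 2$ and smallest part $\geq 1$. Once that bookkeeping is done, the remainder is a direct appeal to the identity displayed in the introduction, which is assumed.
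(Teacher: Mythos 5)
Your proposal is correct and is the standard argument: the paper itself states this proposition without proof (attributing it to MacMahon and Schur as the combinatorial restatement of the first identity), and your staircase subtraction $\mu_i=\lambda_i-(2(n-i)+1)$ is precisely the construction the paper later formalizes as $col(\lambda)$ in its Definition~1. The bookkeeping you flag (gaps $\ge 2$ and $\lambda_n\ge 1$ translating to $\mu$ weakly decreasing and nonnegative, with total shift $1+3+\cdots+(2n-1)=n^2$) checks out, so nothing further is needed.
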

The second identity has a similar interpretation \cite[Chapter 7.3]{And4}, 
using no $1$'s in the difference two partition and parts congruent to $2$ or $3$ modulo 5.  

The analytic proofs of the Rogers-Ramanujan identities establish a 
generalization which replaces both sides by sums, 
and then evaluates a specialization of one side as a product, often using the Jacobi product identity.
Two proofs which do not are \cite{And1} (using Watson's transformation) 
and \cite[Theorem 7.1]{GIS} 
(using a quintic transformation). No results are given in which the individual 
parts on the product side are weighted, for example weighting each part of size $13$  
in the second Rogers-Ramanujan identity by $t$. 
This would replace  the factor $1-q^{13}$ in the denominator product by $1-tq^{13}.$ 

\vfill\pagebreak

The purpose of this paper is to find the corresponding sum side if 
part sizes on product side are given weights. 
Surprisingly, all of our sum sides are manifestly non-negative.
Moreover we shall interpret these sum sides combinatorially, thereby refining MacMahon's result.
If one could give weights to all parts independently, one would 
have a direct Rogers-Ramanujan bijection. We do not this, but do give 
results for the following choices of weighted parts on the product side: 
\begin{enumerate}
\item an arbitrary single part (Theorems~\ref{partM} and  \eqref{partMeq}),
\item some families of two distinct parts (Theorems~\ref{twopartM} and \ref{twopart14}),
\item any subset of the four smallest part sizes (Theorems~\ref{twvx23theorem}, \ref{twvx14thm}).
\end{enumerate}
 The refinements of MacMahon's combinatorial interpretations are given in   
Theorems~\ref{generalminithm} and  \ref{generalmini14thm},
and Corollaries ~\ref{general2partcor}, \ref{general2part14cor}, 
\ref{firstbigcomb}, and \ref{bigcomb}.
Three curious specialized bijective results, Theorems~\ref{spec1}, \ref{spec2} and 
\ref{spec3}, are given in \S5. 

We put 
$$
[M+1]_q=1+q+q^2+\cdots+q^M
$$
for a non-negative integer $M$.

\section{Refinements with one part}

As a warm-up result, let us see what happens if each part of size $2$ in the second 
Rogers-Ramanujan identity is weighted by $t.$

\begin{prop} 
\label{miniprop}
A $t$-refinement of the second Rogers-Ramanujan identity is
$$
\begin{aligned}
1+q^2\frac{(t+q)}{1-tq^2}+
&\sum_{m=2}^\infty 
\frac{q^{m(m+1)}}{(1-q)(1-tq^2)(1-q^3)(1-q^4)\cdots (1-q^m)} \\
&=\frac{1}{(1-tq^2)(1-q^3)(1-q^7)(1-q^8)(1-q^{12})(1-q^{13})\cdots}.
\end{aligned}
$$
\end{prop}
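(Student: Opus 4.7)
The plan is to reduce the claimed identity to the classical second Rogers--Ramanujan identity together with a short rational-function check. The crucial observation is the factorization
\[
(1-q)(1-tq^2)(1-q^3)\cdots(1-q^m) \;=\; \frac{1-tq^2}{1-q^2}\,(q;q)_m \qquad (m \ge 2),
\]
so the $m\ge 2$ summand on the sum side of the proposition equals $\frac{1-q^2}{1-tq^2}\cdot\frac{q^{m(m+1)}}{(q;q)_m}$. Precisely the same factor $\frac{1-q^2}{1-tq^2}$ converts the product side of the classical second Rogers--Ramanujan identity into the product side of the proposition, because the two products differ only in having $(1-tq^2)$ in place of $(1-q^2)$.

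Consequently, after multiplying the claimed identity through by $\frac{1-tq^2}{1-q^2}$, the sums over $m\ge 2$ on the two sides match term-by-term (the right-hand tail being extracted from the classical identity), and the problem reduces to verifying the finite identity
\[
\frac{1-tq^2}{1-q^2}\left(1 + \frac{q^2(t+q)}{1-tq^2}\right) \;=\; 1 + \frac{q^2}{1-q}.
\]
This is a routine computation: both sides simplify to $\frac{1+q^3}{1-q^2}$ after clearing denominators.

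There is essentially no main obstacle. The deep analytic content is imported wholesale from the classical identity, and the peculiar exceptional term $q^2(t+q)/(1-tq^2)$ in the proposition has been designed precisely so that the above algebra collapses. The only thing one must notice is the common factor $(1-q^2)/(1-tq^2)$ that can be stripped off all the ``generic'' terms; once that is in hand, the verification is purely mechanical, and this warm-up gives a clean template for the more elaborate refinements in later sections where more parts are weighted.
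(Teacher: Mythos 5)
Your proof is correct and is essentially the paper's own argument: the paper likewise multiplies the classical second Rogers--Ramanujan identity by $\frac{1-q^2}{1-tq^2}$ and verifies the same finite rational identity (equation \eqref{weirdeq}), which is just your displayed check read in the other direction. Both computations reduce to $\frac{1+q^3}{1-q^2}$, so there is nothing further to add.
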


\begin{proof} Multiply the second Rogers-Ramanujan identity by $\frac{1-q^2}{1-tq^2}$, 
and use
\begin{equation}
\label{weirdeq}
\frac{1-q^2}{1-tq^2} \left( 1+\frac{q^2}{1-q}\right)= 1+\frac{q^2(t+q)}{1-tq^2}.
\end{equation}
\end{proof}

For the combinatorial interpretations we need a definition. 
Recall that the columns of the Ferrers diagram of a partition $\lambda$ 
are the rows of the conjugate partition $\lambda^T$.

\begin{defn} Let ${\sf{Diff_2}}$ be the set of partitions whose consecutive 
differences are at least two. Let ${\sf{Diff_2}}^*$ be the subset of ${\sf{Diff_2}}$ 
consisting of 
partitions with no $1$ as a part.  
\begin{enumerate}
\item If $\lambda=(\lambda_1,\cdots,\lambda_m)\in {\sf{Diff_2}}, $ let
$col(\lambda)$ be the partition formed by the columns 
of $(\lambda_1-(2m-1),\cdots, \lambda_m -1).$ 
\item If $\lambda=(\lambda_1,\cdots,\lambda_m)\in {\sf{Diff_2}}^*,$ let $col^*(\lambda)$ 
be the partition formed by the columns of $(\lambda_1-2m,\cdots, \lambda_m -2).$ 
\end{enumerate}
\end{defn}

\begin{exam} If $\lambda=(16,12,7,4,1)\in {\sf{Diff_2}}$, 
$$col(\lambda)=(7,5,2,1,0)^T=(4,3,2,2,2,1,1).$$
If $\lambda=(13,10,6,4)\in {\sf{Diff_2}}^*$,
$$col^*(\lambda)=(5,4,2,2)^T=(4,4,2,2,1).$$
\end{exam}

We can give a weight to any part, not just $2,$ using the next proposition which generalizes
\eqref{weirdeq}

\begin{prop} For any positive integer $M$,
$$
\begin{aligned}
\frac{1-q^{M+1}}{1-tq^{M+1}}& \sum_{k=0}^M \frac{q^{k(k+1)}}{(1-q)\cdots (1-q^k)}\\
=&
 1+ \frac{q^2(1+q+\cdots+ q^{M-2}+tq^{M-1}+q^M)}{1-tq^{M+1}}\\
 &+
\frac{1-q^{M+1}}{1-tq^{M+1}}
\sum_{k=2}^M \frac{q^{k(k+1)}}{(1-q)\cdots (1-q^k)}.
\end{aligned}
$$
\end{prop}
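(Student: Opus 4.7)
The plan is to observe that this proposition is a direct generalization of equation \eqref{weirdeq} from the earlier proof, and that its proof reduces to elementary polynomial manipulation.

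First I would subtract the tail sum $\frac{1-q^{M+1}}{1-tq^{M+1}}\sum_{k=2}^M \frac{q^{k(k+1)}}{(1-q)\cdots(1-q^k)}$ from both sides, since it appears identically on each. What remains is the $k=0$ and $k=1$ contributions from the left-hand sum, so the identity reduces to showing
$$
\frac{1-q^{M+1}}{1-tq^{M+1}}\left(1+\frac{q^2}{1-q}\right)
\;=\; 1+\frac{q^2\bigl(1+q+\cdots+q^{M-2}+tq^{M-1}+q^M\bigr)}{1-tq^{M+1}}.
$$
For $M=1$ this recovers \eqref{weirdeq} exactly (with the empty sum $1+q+\cdots+q^{M-2}$ read as $0$), which is a good sanity check.

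Next I would simplify the left-hand side. Using $1+\frac{q^2}{1-q}=\frac{1-q+q^2}{1-q}$ together with $\frac{1-q^{M+1}}{1-q}=1+q+\cdots+q^M$, the left-hand side becomes
$$
\frac{(1-q+q^2)(1+q+q^2+\cdots+q^M)}{1-tq^{M+1}}.
$$
A routine expansion of $(1-q+q^2)(1+q+\cdots+q^M)$ collapses the middle terms (each interior power $q^j$ for $2\le j\le M$ receives contributions $+1-1+1=1$, while $q$ and $q^{M+1}$ cancel to $0$), producing $1+q^2+q^3+\cdots+q^M+q^{M+2}$.

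Finally I would put the right-hand side over the common denominator $1-tq^{M+1}$; its numerator is
$$
(1-tq^{M+1})+q^2+q^3+\cdots+q^M+tq^{M+1}+q^{M+2}
\;=\; 1+q^2+q^3+\cdots+q^M+q^{M+2},
$$
matching the left-hand side. There is no real obstacle here; the only minor subtlety is reading the polynomial $1+q+\cdots+q^{M-2}+tq^{M-1}+q^M$ correctly in the degenerate small cases ($M=1$, where it equals $t+q$, and $M=2$, where it equals $tq+q^2$), which the direct expansion handles uniformly.
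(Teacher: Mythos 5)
Your proof is correct, and the overall strategy is the same as the paper's: cancel the common tail $\sum_{k=2}^{M}$ and verify the identity for the $k=0,1$ piece, which rests on the polynomial identity $(1-q+q^2)(1+q+\cdots+q^M)=1+q^2+q^3+\cdots+q^M+q^{M+2}$. The only difference is in execution: the paper divides by $1-q^{M+1}$ and compares coefficients of $t^N$ on both sides, whereas you clear the denominator $1-tq^{M+1}$ and compare numerators directly, which is arguably the more transparent route since it never needs the $t$-expansion. One small slip in your aside: for $M=2$ the numerator polynomial $1+q+\cdots+q^{M-2}+tq^{M-1}+q^M$ is $1+tq+q^2$, not $tq+q^2$ --- the segment $1+q+\cdots+q^{M-2}$ is the single term $1$ when $M=2$ (it is empty only when $M=1$). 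This does not affect your argument, since your numerator comparison
$$
(1-tq^{M+1})+q^2+q^3+\cdots+q^M+tq^{M+1}+q^{M+2}=1+q^2+q^3+\cdots+q^M+q^{M+2}
$$
implicitly uses the correct reading, but the parenthetical as stated is wrong.
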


\begin{proof}Divide both sides by $1-q^{M+1}$ and compute the coefficient of 
$t^N$ in the first two terms of both sides.
$$
\begin{aligned}
LHS=&q^{(M+1)N} \left(1+\frac{q^2}{1-q}\right)=
q^{(M+1)N} \left(\frac{1}{1-q}-q\right)\\
RHS=&\frac{1}{1-q^{M+1}}\left(q^2(1+q+\cdots+q^{M-2}+q^M)q^{(M+1)N}
+q^{(M+1)N}\right)\\
&=q^{(M+1)N}
\frac{(1+q^2+\cdots+q^M+q^{M+2})}{1-q^{M+1}}\\
&=q^{(M+1)N} \left(\frac{1}{1-q}-q\right)=LHS.
\end{aligned}
$$ 
\end{proof}

We can therefore give the part $M+1$ weight $t$, multiply the second
Rogers-Ramanujan identity by $\frac{1-q^{M+1}}{1-tq^{M+1}}$, and obtain a weighted 
generalization. 

\begin{theorem} 
\label{partM}
Suppose that $M+1$ is any part congruent to 2 or 3 modulo 5. Then 
$$
\begin{aligned}
1+&q^2\frac{1+q+\cdots+q^{M-2}+tq^{M-1}+q^M}{1-tq^{M+1}}+
\sum_{k=2}^M 
\frac{q^{k(k+1)}}{(1-q^2)\cdots(1-q^k)}\frac{[M+1]_q}{1-tq^{M+1}} \\
+&\sum_{k=M+1}^\infty 
\frac{q^{k(k+1)}}{(1-q)\cdots (1-q^M)(1-tq^{M+1})(1-q^{M+2})\cdots(1-q^k)} \\
&=
\frac{1-q^{M+1}}{1-tq^{M+1}}
\frac{1}{(1-q^2)(1-q^3)(1-q^7)(1-q^8)(1-q^{12})(1-q^{13})\cdots}.
\end{aligned}
$$
\end{theorem}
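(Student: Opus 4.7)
The plan is to simply multiply the second Rogers-Ramanujan identity through by the rational factor $\frac{1-q^{M+1}}{1-tq^{M+1}}$ and then invoke the previous proposition. On the product side this insertion is immediate: every factor $(1-q^{M+1})$ that appears in the original product is untouched, but the new identity records the replacement $1/(1-q^{M+1}) \mapsto 1/(1-tq^{M+1})$ by carrying the ratio $\frac{1-q^{M+1}}{1-tq^{M+1}}$ as an overall prefactor, which is exactly the right-hand side of Theorem~\ref{partM}.

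On the sum side I split the series $1+\sum_{k\ge 1} q^{k(k+1)}/((1-q)\cdots (1-q^k))$ at the index $k=M$, viewing the leading $1$ as the $k=0$ contribution. The previous proposition then converts
$$\frac{1-q^{M+1}}{1-tq^{M+1}} \sum_{k=0}^M \frac{q^{k(k+1)}}{(1-q)\cdots (1-q^k)}$$
verbatim into $1$, plus $q^2(1+q+\cdots+q^{M-2}+tq^{M-1}+q^M)/(1-tq^{M+1})$, plus $\frac{1-q^{M+1}}{1-tq^{M+1}}\sum_{k=2}^{M} q^{k(k+1)}/((1-q)\cdots(1-q^k))$. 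Using $[M+1]_q=(1-q^{M+1})/(1-q)$, this last sum is exactly the third displayed term of the theorem.

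For the tail $k\ge M+1$ the argument is purely an algebraic cancellation: the factor $(1-q^{M+1})$ in the numerator of the prefactor cancels the identical factor already sitting inside $(1-q)\cdots (1-q^k)$, so the only remaining change to each term is $1/(1-q^{M+1}) \mapsto 1/(1-tq^{M+1})$. This is precisely the fourth displayed sum in the theorem.

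I do not anticipate a genuine obstacle here, since the analytic content lives entirely in the previous proposition; what remains is bookkeeping to confirm that the leading $1$, the single explicit $q^2$ term, the $[M+1]_q$ sum, and the tail sum assemble correctly. The only point needing minor care is that the hypothesis ``$M+1$ congruent to $2$ or $3$ mod $5$'' does not enter the algebra at all; it is needed only so that $(1-q^{M+1})$ actually appears as a factor of the product on the right, making the prefactor $\frac{1-q^{M+1}}{1-tq^{M+1}}$ a legitimate reweighting of a single part rather than an artificial multiplication.
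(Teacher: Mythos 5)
Your proposal is correct and is essentially the paper's own argument: the authors likewise multiply the second Rogers--Ramanujan identity by $\frac{1-q^{M+1}}{1-tq^{M+1}}$, apply the preceding proposition to the partial sum $\sum_{k=0}^{M}$, rewrite $\frac{1-q^{M+1}}{1-q}$ as $[M+1]_q$ for the middle range, and cancel $1-q^{M+1}$ against the denominator in the tail $k\ge M+1$. Your closing remark that the congruence hypothesis on $M+1$ is needed only to make the prefactor a genuine reweighting of a part of the product is also consistent with how the paper uses the result.
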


The combinatorial version of Theorem~\ref{partM} refines MacMahon's result by 
counting the number of $M+1$'s, or equivalently, finding the coefficient of $t^kq^N$ on 
both sides of Theorem~\ref{partM}. Note that the denominator factor $1-q,$ which 
accounts for the $1$'s in  $col^*(\lambda),$ has been replaced by
$$
\frac{[M+1]_q}{1-tq^{M+1}}.
$$

\begin{theorem} 
\label{generalminithm}
Let $M+1$ be a part congruent to $2$ or $3$ modulo 5. 
The number of partitions of $N\ge 1$ into parts congruent to $2$ or $3$ modulo $5$ with 
exactly $k$ $M+1$'s
is equal to the number of partitions $\lambda\in {\sf{Diff_2}}^*$ of $N$
such that either 
\begin{enumerate}
\item $\lambda$ consists of exactly one part of size $N=(M+1)k$ or \hfill\newline
 $N=(M+1)k+~i,$
$2\le i\le  M$ or $i=M+2,$
\item $\lambda$ has between $2$ and  $M$ parts, $col^*(\lambda)$ has between $(M+1)k$ 
and $(M+1)k+M$ $1$'s,
\item $\lambda$ has at least $M+1$ parts, and $col^*(\lambda)$ has $k$ $M+1$'s. 
\end{enumerate}
\end{theorem}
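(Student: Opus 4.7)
\emph{Proof plan.} The statement follows by extracting the coefficient of $t^k q^N$ on both sides of the identity in Theorem~\ref{partM}. On the right-hand side the extraction is immediate: $\frac{1-q^{M+1}}{1-tq^{M+1}}$ cancels the factor $(1-q^{M+1})$ in the product and inserts $\frac{1}{1-tq^{M+1}}=\sum_{j\ge 0}t^{j}q^{(M+1)j}$, so the $t^k$ coefficient selects exactly $k$ copies of the part $M+1$ in a partition of $N$ whose other parts are $\equiv 2,3 \pmod{5}$. This is the count on the left of the theorem.

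For the left-hand side I will use the ``column'' bijection underlying MacMahon's reading of the second Rogers--Ramanujan identity. Writing $n$ for the number of parts of $\lambda$ (to avoid clashing with the exponent $k$ of $t$), the generating function $q^{n(n+1)}/\prod_{i=1}^{n}(1-q^i)$ enumerates $\lambda\in{\sf{Diff_2}}^*$ with exactly $n$ parts, and $\lambda\mapsto col^{*}(\lambda)$ converts this into a partition with parts of size $\le n$ so that each factor $1/(1-q^{i})$ records the multiplicity of $i$ in $col^{*}(\lambda)$. The final sum on the LHS (over $n\ge M+1$) replaces $1/(1-q^{M+1})$ by $1/(1-tq^{M+1})$, so $t$ records the number of parts of size $M+1$ in $col^{*}(\lambda)$ (which can now occur since $n\ge M+1$); extracting $t^{k}q^{N}$ yields exactly case (3).

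The middle sum (over $2\le n\le M$) carries $[M+1]_q/(1-tq^{M+1})$ in place of $1/(1-q)$. Using
$$
\frac{[M+1]_q}{1-tq^{M+1}}=\sum_{j\ge 0}t^{\lfloor j/(M+1)\rfloor}q^{j},
$$
this factor still generates an arbitrary number $j$ of $1$'s in $col^{*}(\lambda)$, but now weighted by $t^{\lfloor j/(M+1)\rfloor}$; isolating the coefficient of $t^{k}$ forces $(M+1)k\le j\le (M+1)k+M$, which is case (2). The remaining explicit term $q^{2}(1+q+\cdots+q^{M-2}+tq^{M-1}+q^{M})/(1-tq^{M+1})$ handles the one-part partitions: a direct expansion in powers of $t$ shows that its coefficient of $t^{k}q^{N}$ equals $1$ exactly when $N-(M+1)k\in\{0,2,3,\dots,M,M+2\}$ (with $k\ge 1$ required when $N=(M+1)k$), which matches case (1) and the constraint $\lambda=(N)\in{\sf{Diff_2}}^{*}$ (i.e.\ $N\ge 2$).

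The main obstacle is the careful bookkeeping for case (1): the numerator of the one-part term is $[M+1]_q-(1-t)q^{M-1}$, which ``removes'' the residue $N-(M+1)k=M+1$ (a value that must be counted under $k+1$ instead, since it corresponds to $\lambda=((M+1)(k+1))$) and ``inserts'' the residue $N-(M+1)k=0$ (so that $\lambda=((M+1)k)$ is counted under $k$). After this routine check the three cases exhaust and partition the LHS contribution, completing the proof.
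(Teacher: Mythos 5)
Your proposal is correct and follows essentially the same route as the paper: the paper's proof also identifies the three cases with the three terms on the sum side of Theorem~\ref{partM} and extracts the coefficient of $t^kq^N$, noting in particular that the middle term contributes $q^{(M+1)k}[M+1]_q$ at $t^k$, exactly as in your expansion. Your additional bookkeeping for the one-part term (the numerator $[M+1]_q-(1-t)q^{M-1}$ trading the residue $M+1$ for the residue $0$) is a correct elaboration of details the paper leaves implicit.
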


\begin{proof} The three cases correspond to the three terms on the sum side 
of Theorem~\ref{partM}. For example, in the second case, the second term 
has coefficient of $t^k$ of $q^{(M+1)k}[M+1]_q,$ which is interpreted as the 
contribution of the $1$'s of the missing denominator factor $1-q$.   
\end{proof}

\begin{exam} If $N=22$, $M+1=3$, and $k=2$ in Theorem~\ref{generalminithm}, 
the two equinumerous sets of partitions are
$$
\begin{array}{l|l|l}
\underline{\mu\in 2,3\  mod \ 5 }&\underline{\lambda\in {\sf{Diff_2}}^*}& \underline{col^*(\lambda)}\\
(12,3,3,2,2)&(16,6)& (2^4,1^{8})\\
(8,8,3,3)& (15,7)& (2^5,1^6)\\
(8,3,3,2,2,2,2)&(12,6,4)& (3^2,1^4)\\
(7,7,3,3,2)&(11,7,4)& (3^2,2,1^2)\\
(3,3,2,2,2,2,2,2,2,2)&(10,8,4)& (3^2,2^2).\\
\end{array}
$$
\end{exam}

Versions of Theorem~\ref{partM} and Theorem~\ref{generalminithm} for 
the first Rogers-Ramanujan identity may be given using
\begin{equation}
\begin{aligned}
\frac{1-q^{M+1}}{1-tq^{M+1}}& \sum_{k=0}^M \frac{q^{k^2}}{(1-q)\cdots (1-q^k)}\\
=& 1+q^1 \frac{(1+q+\cdots+ q^{M-2}+q^{M-1}+tq^M)}{1-tq^{M+1}}\\
 &+
\frac{1-q^{M+1}}{1-tq^{M+1}}
\sum_{k=2}^M \frac{q^{k^2}}{(1-q)\cdots (1-q^k)}.
\end{aligned}
\notag
\end{equation}

The weighted version is 
\begin{equation}
\label{partMeq}
\begin{aligned}
& 1+q^1 \frac{(1+q+\cdots+ q^{M-2}+q^{M-1}+tq^M)}{1-tq^{M+1}}
+
\sum_{k=2}^M q^{k^2}\frac{[M+1]_q}{1-tq^{M+1}}
\frac{1}{(1-q^2)\cdots (1-q^k)}\\
&+\sum_{k=M+1}^\infty \frac{q^{k^2}}
{(1-q)\cdots (1-q^M)(1-tq^{M+1})(1-q^{M+2})\cdots(1-q^k)}\\
&=
\frac{1-q^{M+1}}{1-tq^{M+1}}
\frac{1}{(1-q^1)(1-q^4)(1-q^6)(1-q^9)(1-q^{11})(1-q^{14})\cdots}.
\end{aligned}
\end{equation}

This gives the following refinement of MacMahon's result.

\begin{theorem} 
\label{generalmini14thm}
Let $M+1\ge2$ be a part congruent to $1$ or $4$ modulo 5. 
The number of partitions of $N\ge 1$ into parts congruent to $1$ or $4$ modulo $5$ with 
exactly $k$ $M+1$'s
is equal to the number of partitions $\lambda\in {\sf{Diff_2}}$ of $N$
such that either 
\begin{enumerate}
\item $\lambda$ consists of exactly one part of size  $N=(M+1)k+i,$
$0\le i\le  M,$ or
\item $\lambda$ has between $2$ and  $M$ parts, $col(\lambda)$ has between $(M+1)k$ 
and $(M+1)k+M$ $1$'s,
\item $\lambda$ has at least $M+1$ parts, and $col(\lambda)$ has $k$ $M+1$'s. 
\end{enumerate}
\end{theorem}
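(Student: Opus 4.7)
The plan is to read off the three cases directly from the weighted identity~\eqref{partMeq} by extracting the coefficient of $t^kq^N$ on both sides, exactly paralleling the proof of Theorem~\ref{generalminithm}. On the right-hand side, multiplication of the first Rogers--Ramanujan product by $\frac{1-q^{M+1}}{1-tq^{M+1}}$ replaces the factor $\frac{1}{1-q^{M+1}}$ by $\frac{1}{1-tq^{M+1}}$, so the coefficient of $t^kq^N$ is precisely the number of partitions of $N$ into parts congruent to $1$ or $4$ modulo $5$ with exactly $k$ copies of $M+1$. It then remains to interpret each of the three terms on the left, one per case.

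For the first term $1+q\cdot\frac{1+q+\cdots+q^{M-1}+tq^M}{1-tq^{M+1}}$, I expand $1/(1-tq^{M+1})$ as a geometric series; the coefficient of $t^k$ collapses to $q^{k(M+1)}+q^{k(M+1)+1}+\cdots+q^{k(M+1)+M}$ for $k\ge 1$ and to $1+q+\cdots+q^M$ for $k=0$. Each monomial $q^N$ corresponds to a (vacuously difference-$2$) partition consisting of a single part of size $N=(M+1)k+i$ with $0\le i\le M$, yielding case~(1). For the middle term with $2\le m\le M$, the factor $\frac{q^{m^2}}{(1-q^2)\cdots(1-q^m)}$ is the standard generating function for $\lambda\in{\sf{Diff_2}}$ with exactly $m$ parts whose $col(\lambda)$ has no column of height~$1$: the power $q^{m^2}$ accounts for the minimum element $(2m-1,\ldots,3,1)$, and $\frac{1}{(1-q^2)\cdots(1-q^m)}$ builds $col(\lambda)$ from columns of heights $2,\ldots,m$. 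Multiplying by $\frac{[M+1]_q}{1-tq^{M+1}}=(1+q+\cdots+q^M)\sum_{j\ge 0}(tq^{M+1})^j$ then appends between $k(M+1)$ and $k(M+1)+M$ ones to $col(\lambda)$ at the coefficient of $t^k$, giving case~(2).

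For the third term with $m\ge M+1$, each denominator factor $\frac{1}{1-q^i}$ (for $1\le i\le m$ with $i\ne M+1$) is the generating function for the columns of $col(\lambda)$ of height~$i$, while $\frac{1}{1-tq^{M+1}}$ marks each column of height $M+1$ by $t$; extracting the coefficient of $t^k$ restricts to $\lambda\in{\sf{Diff_2}}$ with at least $M+1$ parts whose $col(\lambda)$ contains exactly $k$ copies of $M+1$, giving case~(3). The main bookkeeping hurdle is the off-by-one in case~(1): the two clumps $1+q(1+q+\cdots+q^{M-1})$ and the residual $q\cdot tq^M/(1-tq^{M+1})$ must combine cleanly to produce the full range $0\le i\le M$ of allowed single-part sizes, which is a short verification analogous to the one in the proof of Theorem~\ref{generalminithm}. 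Everything else follows from the same denominator-by-denominator column interpretation.
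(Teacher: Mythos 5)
Your proposal is correct and follows essentially the same route as the paper, which derives this theorem by extracting the coefficient of $t^kq^N$ from the three terms on the sum side of \eqref{partMeq}, exactly as in the proof given for Theorem~\ref{generalminithm}. Your term-by-term interpretation (single-part range for the first term, the $[M+1]_q/(1-tq^{M+1})$ factor replacing the $1$'s generator in the middle range, and direct column-marking for $m\ge M+1$ parts) matches the paper's intended argument.
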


\begin{exam} If $N=23$, $M+1=4$, and $k=3$ in Theorem~\ref{generalmini14thm},
the two equinumerous sets of partitions are
$$
\begin{array}{l|l|l}
\underline{\mu\in 1,4\  mod \ 5 }&\underline{\lambda\in {\sf{Diff_2}}}& \underline{col(\lambda)}\\
(11,4,4,4)&(20,3)& (2^2,1^{15})\\
(9,4,4,4,1,1)& (19,4)& (2^3,1^{13})\\
(6,4,4,4,1,1,1,1,1)&(19,3,1)& (1^{14})\\
(4,4,4,1^{11})&(18,4,1)& (2,1^{12}).\\
\end{array}
$$
\end{exam}

\section{Refinements with two parts}

One may ask for a  version of the Rogers-Ramanujan identities which weights
more than one part. In this section we give two types of results. The first type has two 
weighted parts: $2$ and $M\ge 7$ for $2$ or $3$ modulo $5$ (Theorem~\ref{twopartM}), or parts 
$1$ and $M\ge 6$ even for $1$ or $4$ modulo $5$ (Theorem~\ref{twopart14}). 
The second type allows either $2$'s or $3$'s. 

What is key here is the positivity of the numerator polynomials 
in the modified Rogers-Ramanujan expansions. 

First we give an analogue of \eqref{weirdeq} for parts $2$ and $M$.
\begin{prop} 
\label{parts2Meq}
For any positive integer $M$,
$$
\begin{aligned}
&\frac{(1-q^2)(1-q^{M})}{(1-tq^2)(1-wq^{M})}
\sum_{k=0}^2 \frac{q^{k(k+1)}}{(1-q)\cdots(1-q^k)}\\
=& 1+q^2\frac{t+q}{1-tq^2}+\frac{q^6}{(1-tq^2)(1-wq^M)}
\left( [M]_q+(w-1)(q^{M-3}+q^{M-6})\right).
\end{aligned}
$$
\end{prop}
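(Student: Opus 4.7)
The plan is to reduce this two-variable identity to the one-variable case \eqref{weirdeq} by peeling off the $k=0,1$ terms of the sum on the left and tracking how the extra factor $(1-q^M)/(1-wq^M)$ propagates.

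First I would use \eqref{weirdeq} to collapse the $k=0,1$ contribution of the inner sum, together with the prefactor $(1-q^2)/(1-tq^2)$, into $(1+q^3)/(1-tq^2)$; this rests on the one-line simplification $1-tq^2+q^2(t+q)=1+q^3$. Keeping the $k=2$ term as it stands, the left-hand side becomes
\[
\frac{1-q^M}{1-wq^M}\cdot\frac{1+q^3}{1-tq^2}+\frac{(1-q^M)\,q^6}{(1-q)(1-tq^2)(1-wq^M)}.
\]

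Next I would move the first two terms of the right-hand side, which together equal $(1+q^3)/(1-tq^2)$, over to the left. This converts the factor $(1-q^M)/(1-wq^M)$ in the first piece into $(1-q^M)/(1-wq^M)-1=(w-1)q^M/(1-wq^M)$. Clearing the common denominator $(1-tq^2)(1-wq^M)$, the proposition reduces to the polynomial identity
\[
(w-1)q^M(1+q^3)+\frac{(1-q^M)q^6}{1-q}=q^6[M]_q+q^6(w-1)(q^{M-3}+q^{M-6}).
\]

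The last step is a routine verification: by definition $(1-q^M)/(1-q)=[M]_q$, so the second terms on each side agree, and the remaining terms match because $q^6(q^{M-3}+q^{M-6})=q^{M+3}+q^M=q^M(1+q^3)$. The only (mild) obstacle is bookkeeping; conceptually the statement is engineered so that the entire $w$-dependence is confined to the correction $(w-1)(q^{M-3}+q^{M-6})$, while \eqref{weirdeq} absorbs everything else.
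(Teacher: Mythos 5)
Your proof is correct: every step checks out. The collapse of the $k=0,1$ terms via \eqref{weirdeq} to $(1+q^3)/(1-tq^2)$ is right (since $1-tq^2+q^2(t+q)=1+q^3$), the conversion of $(1-q^M)/(1-wq^M)$ into $(w-1)q^M/(1-wq^M)$ after subtracting the first two terms of the right side is right, and the final polynomial identity holds because $(1-q^M)/(1-q)=[M]_q$ and $q^6(q^{M-3}+q^{M-6})=q^M(1+q^3)$, so everything multiplying $(w-1)$ cancels. Note that the paper does not actually supply a written proof of Proposition~\ref{parts2Meq}: as explained in its Remarks section, these preliminary rational function identities were found experimentally and then verified by computer algebra. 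Your argument is therefore a genuine addition rather than a restatement --- it isolates exactly where the $w$-dependence lives (the single correction term $(w-1)(q^{M-3}+q^{M-6})$, equivalently $(w-1)q^M(1+q^3)/q^6$) and shows the rest is the already-established one-variable identity \eqref{weirdeq}. What the computer-algebra route buys is uniformity across the many similar propositions in the paper; what your route buys is an explanation of why the numerator has this particular shape, and it makes clear that the identity is a formal Laurent-polynomial identity valid for every positive integer $M$, with the positivity caveat for $M\ge 6$ being a separate observation.
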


Note that if $M\ge 6,$ the numerator polynomial for $q^6$ 
has positive coefficients, and 
$$
\frac{( [M]_q+(w-1)(q^{M-3}+q^{M-6}))}{1-wq^M}=\frac{1}{1-q} \quad {\text{if }} w=1.
$$
So we may use this quotient to represent the $1$'s in $col^*(\lambda)$
when the difference partition has exactly two parts.

Theorem~\ref{twopartM} uses Proposition~\ref{parts2Meq} and gives an 
alternative sum expression (when $t=1$) to Theorem~\ref{partM} for 
the single weighted part $M$.

\begin{theorem} 
\label{twopartM}
Suppose that $M\ge 7$ is any part congruent to 2 or 3 modulo 5. Then 
$$
\begin{aligned}
1+&q^2\frac{t+q}{1-tq^{2}}+
\frac{q^6}{(1-tq^2)(1-wq^M)}
\left( [M]_q+(w-1)(q^{M-3}+q^{M-6})\right) \\
+&\sum_{k=3}^{M-1} 
\frac{q^{k(k+1)}}{(1-tq^2)(1-q^3)\cdots (1-q^{k})}\frac{[M]_q}{1-wq^M} \\
+&\sum_{k=M}^\infty 
\frac{q^{k(k+1)}}{(1-q)(1-tq^2)\cdots (1-q^{M-1})(1-wq^{M})(1-q^{M+1})\cdots(1-q^k)} \\
&=
\frac{1-q^{M}}{1-wq^{M}}
\frac{1}{(1-tq^2)(1-q^3)(1-q^7)(1-q^8)(1-q^{12})(1-q^{13})\cdots}.
\end{aligned}
$$
\end{theorem}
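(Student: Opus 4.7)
The plan is to multiply the second Rogers--Ramanujan identity through by the factor $\frac{(1-q^2)(1-q^M)}{(1-tq^2)(1-wq^M)}$ and then identify the resulting expression term by term with the claimed sum side.

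First I would look at the product side. The denominator of the second Rogers--Ramanujan identity has exactly one factor of $1-q^2$ and one factor of $1-q^M$ (since $M\equiv 2,3\pmod 5$), so multiplication by $\frac{(1-q^2)(1-q^M)}{(1-tq^2)(1-wq^M)}$ replaces those two factors by $1-tq^2$ and $1-wq^M$ while introducing the overall prefactor $\frac{1-q^M}{1-wq^M}$. This is exactly the right-hand side stated in the theorem.

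Next I would split the sum side $1+\sum_{k=1}^\infty \frac{q^{k(k+1)}}{(1-q)\cdots(1-q^k)}$ into three ranges: $0\le k\le 2$, $3\le k\le M-1$, and $k\ge M$. The small range $0\le k\le 2$ is handled entirely by Proposition~\ref{parts2Meq}, which delivers the first three terms of the theorem verbatim. For $3\le k\le M-1$, the factor $(1-q^M)$ in the numerator does not cancel against anything in $(q;q)_k$, so combining it with the $(1-q)$ in the denominator via $\frac{1-q^M}{1-q}=[M]_q$, while the $(1-q^2)$ in the numerator cancels against the $(1-q^2)$ in $(q;q)_k$ (leaving $(1-tq^2)$), gives the middle sum. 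For $k\ge M$, the numerator $(1-q^2)(1-q^M)$ cancels against the corresponding factors of $(q;q)_k$, and one inserts $(1-tq^2)$ and $(1-wq^M)$ in their places; this produces the tail sum.

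The main obstacle is really Proposition~\ref{parts2Meq} itself (stated and used as given); once that is in hand, the rest of the argument is bookkeeping about which ranges of $k$ cause which cancellations. The reason the hypothesis $M\ge 7$ appears is precisely so that the numerator polynomial $[M]_q+(w-1)(q^{M-3}+q^{M-6})$ produced by Proposition~\ref{parts2Meq} is manifestly positive, and so that $M\ge 3$ separates cleanly from the $k\le 2$ block and from the $k\ge M$ block.
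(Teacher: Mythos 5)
Your proposal is correct and is exactly the paper's (implicit) argument: the paper states that Theorem~\ref{twopartM} ``uses Proposition~\ref{parts2Meq}'' and, as with Theorem~\ref{partM}, the proof is just multiplication of the second Rogers--Ramanujan identity by $\frac{(1-q^2)(1-q^M)}{(1-tq^2)(1-wq^M)}$, with the $k\le 2$ block handled by the Proposition and the cancellation bookkeeping in the ranges $3\le k\le M-1$ and $k\ge M$ done as you describe. One small quibble with your closing remark: positivity of $[M]_q+(w-1)(q^{M-3}+q^{M-6})$ already holds for $M\ge 6$ (as the paper notes); the hypothesis $M\ge 7$ simply reflects that $7$ is the smallest part congruent to $2$ or $3$ modulo $5$ other than $2$ and $3$ themselves.
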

  
\begin{cor} 
\label{general2partcor}
Let $M\ge 7$ be a part congruent to $2$ or $3$ modulo 5. 
The number of partitions of $N\ge 1$ into parts congruent to $2$ or $3$ modulo $5$ with 
exactly $k$ $M$'s and $j$ $2$'s
is equal to the number of partitions $\lambda\in {\sf{Diff_2}}^*$ of $N$
such that either 
\begin{enumerate}
\item $\lambda$ consists of exactly one part of size $N=2j$ or 
 $N=2j+3,$ and $k=0,$
\item $\lambda$ has exactly two parts, $col^*(\lambda)$ has exactly $j$ $2$'s, 
and either $Mk+i$ $1$'s, $0\le i\le M-1$, $i\neq M-3,M-6$, $k\ge 0,$ or $M(k-1)+M-3$, or 
$M(k-1)+M-6$ $1$'s, $k\ge 1$,  
\item $\lambda$ has between three and  $M-1$ parts, $col^*(\lambda)$ has $j$ $2$'s and between
$Mk$ and $Mk+M-1$ $1$'s,
\item $\lambda$ has at least $M$ parts, and $col^*(\lambda)$ has $j$ $2$'s and $k$ $M$'s. 
\end{enumerate}
\end{cor}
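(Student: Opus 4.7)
The plan is to prove the corollary by extracting the coefficient of $t^jw^kq^N$ from both sides of Theorem~\ref{twopartM} and matching each piece of the left-hand side with one of the four listed cases. On the right-hand side, because $M\equiv 2$ or $3\pmod 5$, the factor $1-q^M$ cancels one copy of $1-q^M$ inside the product, and what remains is the generating function for partitions into parts $\equiv 2,3\pmod 5$ in which each $2$ is weighted $t$ and each $M$ is weighted $w$. Its coefficient of $t^jw^kq^N$ is then the number of such partitions of $N$ with exactly $j$ $2$'s and $k$ $M$'s, which is the enumeration named on the left in the corollary.

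For the left-hand side I dispatch the four summands separately. Piece~(1) expands as $1+\sum_{j\ge 1}t^jq^{2j}+\sum_{j\ge 0}t^jq^{2j+3}$, so for $k=0$ the coefficient of $t^jq^N$ equals $1$ exactly when $N=2j$ or $N=2j+3$; these are the single-part partitions of Case~(1). For pieces~(3) and~(4) I invoke the standard dissection $\lambda_i=2(m-i+1)+\mu_i$ of a ${\sf{Diff_2}}^*$-partition with $m$ parts, which writes $\lambda$ as the staircase plus a partition $\mu$ with at most $m$ parts and $\mu^T=col^*(\lambda)$; thus $q^{m(m+1)}$ records the staircase and each factor $1/(1-q^i)$ tracks the parts of size $i$ in $col^*(\lambda)$. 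In piece~(3) the constraint $m\le M-1$ frees the $1/(1-q)$ slot to be replaced by $[M]_q/(1-wq^M)=\sum_{k\ge 0}w^kq^{Mk}[M]_q$, which sorts the number of $1$'s in $col^*(\lambda)$ into the bins $[Mk,Mk+M-1]$, while $1/(1-tq^2)$ weights each $2$ by $t$: this is exactly Case~(3). In piece~(4), where $m\ge M$, the substitutions $(1-q^2)\to(1-tq^2)$ and $(1-q^M)\to(1-wq^M)$ directly weight the $2$'s and $M$'s of $col^*(\lambda)$, giving Case~(4).

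The main obstacle is piece~(2): for a two-part $\lambda$, $col^*(\lambda)$ contains only $1$'s and $2$'s, so $w$ cannot be read as counting anything inside $col^*(\lambda)$, and the numerator carries the awkward correction $(w-1)(q^{M-3}+q^{M-6})$. The key step is to rewrite the numerator as
$$
\sum_{\substack{0\le i\le M-1\\ i\ne M-3,\,M-6}}q^{6+i}\ +\ w\bigl(q^M+q^{M+3}\bigr),
$$
using $q^M=q^{6+(M-6)}$ and $q^{M+3}=q^{6+(M-3)}$ to absorb the subtracted terms. Division by $(1-tq^2)(1-wq^M)$ then produces $t^jw^kq^N$ in three ways: $N=6+i+2j+Mk$ for the restricted $i$ and any $k\ge 0$; or $N=Mk+2j$; or $N=Mk+2j+3$ (the last two with $k\ge 1$). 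Via $\lambda_2=j+2$ and (number of $1$'s in $col^*(\lambda))=\lambda_1-\lambda_2-2$, these three groups correspond precisely to the three bulleted possibilities for the number of $1$'s in Case~(2). Since the four cases are distinguished by the number of parts of $\lambda$ (one, two, $3$ through $M-1$, or $\ge M$), they are disjoint and together exhaust the right-hand count.
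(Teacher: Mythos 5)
Your proposal is correct and follows exactly the route the paper intends: Corollary~\ref{general2partcor} is obtained by extracting the coefficient of $t^jw^kq^N$ from both sides of Theorem~\ref{twopartM}, with the staircase dissection $q^{m(m+1)}/\prod(1-q^i)$ reading off $col^*(\lambda)$, just as in the paper's proofs of Theorem~\ref{generalminithm} and Corollary~\ref{firstbigcomb}. Your rewriting of the two-part numerator as $\sum_{i\neq M-3,M-6}q^{6+i}+w(q^M+q^{M+3})$ and the translation via $\lambda_2=j+2$, $\#1\text{'s}=\lambda_1-\lambda_2-2$ correctly accounts for the three subcases of item~(2).
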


A version of Theorem~\ref{twopartM} and Corollary~\ref{general2partcor} for the 
first Rogers-Ramanujan identity using parts $1$ and an even $M\ge 4$ may be given using
$$
\begin{aligned}
&\frac{(1-q^1)(1-q^{M})}{(1-tq^1)(1-wq^{M})}
\sum_{k=0}^2 \frac{q^{k^2}}{(1-q)\cdots(1-q^k)}\\
=& 1+q^1\frac{t}{1-tq}+\frac{q^4}{(1-tq)(1-wq^M)}
\left( [M/2]_{q^2}+(w-1)q^{M-4}\right).
\end{aligned}
$$
In this case
$$
\frac{([M/2]_{q^2}+(w-1)q^{M-4})}{1-wq^M}= \frac{1}{1-q^2} {\text{ if }} w=1.
$$
This time we use the quotient to represent $2$'s in $col(\lambda)$ when 
$\lambda$ has exactly two parts.

\begin{prop} 
\label{twopart14}
If $M\ge 4$ is even and congruent to 1 or 4 modulo $5$, then
$$
\begin{aligned}
1+&q\frac{t}{1-tq}+
\frac{q^4}{1-tq}
\frac{\left( [M/2]_{q^2}+(w-1)q^{M-4}\right)}{1-wq^M} \\
+&\sum_{k=3}^{M-1} 
\frac{q^{k^2}}{(1-tq)(1-q^3)\cdots (1-q^{k})}\frac{[M/2]_{q^2}}{1-wq^M} \\
+&\sum_{k=M}^\infty 
\frac{q^{k^2}}{(1-tq)(1-q^2)\cdots (1-q^{M-1})(1-wq^{M})(1-q^{M+1})\cdots(1-q^k)} \\
&=
\frac{1-q^{M}}{1-wq^{M}}
\frac{1}{(1-tq)(1-q^4)(1-q^6)(1-q^9)(1-q^{12})(1-q^{13})\cdots}.
\end{aligned}
$$
\end{prop}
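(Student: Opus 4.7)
The plan is to mimic the derivation of Theorem~\ref{twopartM}, replacing the second Rogers--Ramanujan identity and its ``parts $2$ and $M$'' multiplier by the first identity and the analogous ``parts $1$ and $M$'' multiplier. Concretely, I would multiply both sides of the first Rogers--Ramanujan identity by
$$\frac{(1-q)(1-q^M)}{(1-tq)(1-wq^M)}.$$
On the product side this immediately yields the stated right-hand side, since the factor replaces the $1-q$ in the denominator by $1-tq$ and inserts the extra quotient $(1-q^M)/(1-wq^M)$, which is exactly the weighting of each part equal to $1$ by $t$ and each part equal to $M$ by $w$.

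On the sum side I would split the series $\sum_{k\ge 0} q^{k^2}/[(1-q)\cdots(1-q^k)]$ into three ranges: $k\in\{0,1,2\}$, $3\le k\le M-1$, and $k\ge M$. For the first range I would invoke the preliminary identity displayed just before the proposition,
$$\frac{(1-q)(1-q^M)}{(1-tq)(1-wq^M)}\sum_{k=0}^{2}\frac{q^{k^2}}{(1-q)\cdots(1-q^k)}=1+\frac{tq}{1-tq}+\frac{q^4\bigl([M/2]_{q^2}+(w-1)q^{M-4}\bigr)}{(1-tq)(1-wq^M)},$$
which is a direct rational-function check: after collecting the three terms on the left over the common denominator $(1-q)(1-q^2)(1-tq)(1-wq^M)$ and clearing, both sides reduce to the elementary identity
$$1-q^M+q^4\,[M/2]_{q^2}=[M/2]_{q^2}\bigl(1-q^2+q^4\bigr),$$
which is immediate from $1-q^M=(1-q^2)[M/2]_{q^2}$. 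The parity assumption $M$ even is exactly what makes $[M/2]_{q^2}$ a polynomial and $q^{M-4}$ appear as an integer power; the congruence $M\equiv 1,4\pmod 5$ is not used in the algebra, only in matching the product side with the first Rogers--Ramanujan identity.

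For the middle range $3\le k\le M-1$ I would use $1-q^M=(1-q^2)[M/2]_{q^2}$ to rewrite
$$\frac{(1-q)(1-q^M)}{(1-tq)(1-wq^M)}\cdot\frac{q^{k^2}}{(1-q)(1-q^2)\cdots(1-q^k)}=\frac{q^{k^2}}{(1-tq)(1-q^3)\cdots(1-q^k)}\cdot\frac{[M/2]_{q^2}}{1-wq^M},$$
producing the stated middle sum term by term. For the tail $k\ge M$ the factor $(1-q^M)$ from the multiplier cancels against the $1-q^M$ already present in the denominator of the $k$-th Rogers--Ramanujan term, while the factor $(1-q)$ and the factors $1-tq$, $1-wq^M$ reposition themselves in the slots for parts $1$ and $M$, giving the displayed third sum.

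The main obstacle is really just the preliminary three-term identity; once it is in hand the remainder is a bookkeeping exercise identical to the one used in Theorem~\ref{twopartM}. Beyond the algebra, the substantive content is the observation (noted in the paper) that for $M\ge 4$ the numerator polynomial $[M/2]_{q^2}+(w-1)q^{M-4}$ has non-negative coefficients in $q$ and $w$, which is why this rearrangement yields an $(M/2)+1$-term positive expansion and, in turn, a combinatorial refinement parallel to Corollary~\ref{general2partcor}.
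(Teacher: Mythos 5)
Your proposal is correct and takes essentially the same route as the paper: Proposition~\ref{twopart14} is obtained by multiplying the first Rogers--Ramanujan identity by $(1-q)(1-q^M)/\bigl((1-tq)(1-wq^M)\bigr)$ and invoking the preliminary rational-function identity displayed just before the proposition, with the ranges $k\le 2$, $3\le k\le M-1$, $k\ge M$ handled exactly as you describe. The only difference is that you verify the preliminary identity by hand via $1-q^M=(1-q^2)[M/2]_{q^2}$ (correctly), whereas the paper leaves such verifications to computer algebra.
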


\begin{cor} 
\label{general2part14cor}
Let $M\ge 4$ be a even part congruent to $1$ or $4$ modulo 5. 
The number of partitions of $N\ge 1$ into parts congruent to $1$ or $4$ modulo $5$ with 
exactly $k$ $M$'s and $j$ $1$'s
is equal to the number of partitions $\lambda\in {\sf{Diff_2}}$ of $N$
such that either 
\begin{enumerate}
\item $\lambda$ consists of exactly one part of size  $N=j,$ and $k=0,$
\item $\lambda$ has exactly two parts, $col(\lambda)=2^{\ell}1^j,$ 
where $\ell=Mk/2+i,$ \newline $0\le i\le M/2-1$, $i\neq M/2-2,$ $k\ge 0,$ or $\ell=Mk/2-2,$
$k\ge 1$,  
\item $\lambda$ has between three and  $M-1$ parts, $col(\lambda)$ has $j$ $1$'s and between
$Mk/2$ and $Mk/2+M/2-1$ $2$'s,
\item $\lambda$ has at least $M$ parts, and $col(\lambda)$ has $j$ $1$'s and $k$ $M$'s. 
\end{enumerate}
\end{cor}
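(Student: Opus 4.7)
The plan is to match coefficients of $t^jw^kq^N$ on both sides of Proposition~\ref{twopart14}. The right-hand side equals the first Rogers--Ramanujan product with the single factor $1-q$ replaced by $1-tq$ and the single factor $1-q^M$ replaced by $1-wq^M$, so the coefficient of $t^jw^kq^N$ there is precisely the number of partitions of $N$ into parts congruent to $1$ or $4$ modulo $5$ with $j$ ones and $k$ copies of $M$, which is the left-hand count in the corollary.

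On the left side of Proposition~\ref{twopart14} I would apply the standard bijection behind the first-identity sum: for $\lambda=(\lambda_1,\dots,\lambda_r)\in{\sf{Diff_2}}$, the columns $col(\lambda)$ form an arbitrary partition of $|\lambda|-r^2$ into parts of size at most $r$, so $\frac{q^{r^2}}{(1-q)(1-q^2)\cdots(1-q^r)}$ generates ${\sf{Diff_2}}$-partitions with exactly $r$ parts, and $\frac{1}{1-q^i}$ tracks the number of $i$'s in $col(\lambda)$. The five terms of the LHS partition ${\sf{Diff_2}}$ into the blocks $r=0,\ 1,\ 2,\ 3\le r\le M-1,\ r\ge M$, corresponding to cases 1--4 of the corollary. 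Globally replacing $\frac{1}{1-q}$ by $\frac{1}{1-tq}$ weights each $1$ in $col(\lambda)$ by $t$, and for $r\ge M$ replacing $\frac{1}{1-q^M}$ by $\frac{1}{1-wq^M}$ weights each $M$ by $w$. These substitutions yield cases 1, 3, and 4 at once.

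The main obstacle is case 2 (Term 3): for $r=2$ there is no $\frac{1}{1-q^M}$ factor to modify, so the $w$-weight must be carried by the corrected $q^2$-contribution $\frac{[M/2]_{q^2}+(w-1)q^{M-4}}{1-wq^M}$. I would expand the numerator as
$$[M/2]_{q^2}+(w-1)q^{M-4}=1+q^2+q^4+\cdots+q^{M-6}+q^{M-2}+wq^{M-4}$$
(the $-q^{M-4}$ cancelling the $q^{M-4}$ in $[M/2]_{q^2}$), divide by $1-wq^M$, and read off the coefficient of $w^kq^{2\ell}$: the non-$w$ terms of the numerator contribute $\ell=Mk/2+i$ with $0\le i\le M/2-1$, $i\ne M/2-2$, for $k\ge 0$, while $wq^{M-4}$ contributes $\ell=Mk/2-2$ for $k\ge 1$. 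This is exactly the admissible set of $\ell$'s in case 2. Since a two-part $\lambda=(\lambda_1,\lambda_2)\in{\sf{Diff_2}}$ satisfies $col(\lambda)=2^{\lambda_2-1}1^{\lambda_1-\lambda_2-2}$, the $t^jw^k$ weighting records exactly $col(\lambda)=2^\ell 1^j$ as required, and the remaining verifications are routine.
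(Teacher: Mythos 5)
Your proposal is correct and follows essentially the same route the paper uses for these combinatorial corollaries (compare the proofs of Theorem~\ref{generalminithm} and Corollary~\ref{firstbigcomb}): extract the coefficient of $t^jw^kq^N$ from both sides of Proposition~\ref{twopart14}, letting the terms of the sum side index the number of parts of $\lambda$ and the modified denominator factors track the multiplicities in $col(\lambda)$. Your detailed expansion of $\bigl([M/2]_{q^2}+(w-1)q^{M-4}\bigr)/(1-wq^M)$ for the two-part case, which is the only delicate step, matches the admissible exponents $\ell=Mk/2+i$ ($i\neq M/2-2$) and $\ell=Mk/2-2$ exactly as in the statement.
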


Theorem~\ref{twopartM} does not apply to the part sizes $2$ and $3$.  
We have two such results. 
We do not state the preliminary rational function identity analogous to \eqref{weirdeq}, 
and give only the final results.

\begin{prop} 
\label{firsttw}
A $t,w$-refinement of the second Rogers-Ramanujan identity for part sizes $2$ and $3$ is
$$
\begin{aligned}
1+&q^2\frac{(t+wq)}{1-tq^2}+
q^6\frac{(w^2+q+q^2)}{(1-tq^2)(1-wq^3)}\\
&+\sum_{m=3}^\infty 
\frac{q^{m(m+1)}}{(1-q)(1-tq^2)(1-wq^3)(1-q^4)\cdots (1-q^m)} \\
&=\frac{1}{(1-tq^2)(1-wq^3)(1-q^7)(1-q^8)(1-q^{12})(1-q^{13})\cdots}.
\end{aligned}
$$
\end{prop}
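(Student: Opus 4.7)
The strategy mirrors the proof of Proposition~\ref{miniprop}: multiply the second Rogers-Ramanujan identity through by the single factor
$$\frac{(1-q^2)(1-q^3)}{(1-tq^2)(1-wq^3)},$$
so that on the product side the denominator factors $1-q^2$ and $1-q^3$ are replaced by $1-tq^2$ and $1-wq^3$, giving exactly the stated right-hand side.

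On the sum side, for every $m\ge 3$ the extra $(1-q^2)(1-q^3)$ in the numerator cancels the factors $(1-q^2)(1-q^3)$ already sitting inside $(1-q)(1-q^2)(1-q^3)\cdots(1-q^m)$, and the tail immediately rewrites as
$$\sum_{m=3}^\infty \frac{q^{m(m+1)}}{(1-q)(1-tq^2)(1-wq^3)(1-q^4)\cdots (1-q^m)},$$
as required. The whole problem thereby reduces to checking one finite rational identity that absorbs the three small-$m$ terms $m=0,1,2$, namely
$$\frac{(1-q^2)(1-q^3)}{(1-tq^2)(1-wq^3)}\left(1+\frac{q^2}{1-q}+\frac{q^6}{(1-q)(1-q^2)}\right)=1+\frac{q^2(t+wq)}{1-tq^2}+\frac{q^6(w^2+q+q^2)}{(1-tq^2)(1-wq^3)}.$$

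I would verify this by clearing denominators over $(1-q)(1-tq^2)(1-wq^3)$. The bracketed sum on the LHS collapses to $(1-q+q^3-q^4+q^6)/((1-q)(1-q^2))$, so after multiplying by $(1-q^3)/(1-q)$ one gets $(1-q+q^7-q^9)/((1-q)(1-tq^2)(1-wq^3))$; using $1-q+q^7-q^9=(1-q)(1+q^7+q^8)$ this simplifies to $(1+q^7+q^8)/((1-tq^2)(1-wq^3))$. On the RHS, combining the three terms over $(1-tq^2)(1-wq^3)$ yields a numerator $(1-tq^2)(1-wq^3)+q^2(t+wq)(1-wq^3)+q^6(w^2+q+q^2)$; a direct expansion shows all $t$- and $w$-dependent terms cancel, leaving the same $1+q^7+q^8$.

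The only nontrivial aspect is producing the correct numerator polynomials $t+wq$ and $w^2+q+q^2$; the $w^2$ in the coefficient of $q^6$ is the one surprising ingredient (it is precisely what is needed to kill the $-w^2q^6$ that arises from $q^2(wq)(-wq^3)$). Once these are in hand—reverse-engineered from requiring the $t,w$ cross-terms to collapse—the verification is mechanical and the proposition follows at once from the classical identity.
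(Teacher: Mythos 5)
Your proposal is correct and is essentially the paper's own (unprinted) argument: the paper states that these refinements are obtained by multiplying the Rogers--Ramanujan identity by the appropriate rational function and verifying the preliminary finite rational identity, which it deliberately omits. Your explicit check that both sides of that finite identity reduce to $(1+q^7+q^8)/\bigl((1-tq^2)(1-wq^3)\bigr)$ is accurate and supplies exactly the detail the paper leaves to computer algebra.
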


These positive expansions are not unique.
\begin{prop} 
\label{secondtw}
Another $t,w$-refinement of the second Rogers-Ramanujan identity 
for part sizes $2$ and $3$ is
$$
\begin{aligned}
1+&q^2\frac{(t+wq +t^2q^2)}{1-wq^3}+
q^6\frac{(q + q^2 + t^3)}{(1-tq^2)(1-wq^3)}\\
&+\sum_{m=3}^\infty 
\frac{q^{m(m+1)}}{(1-q)(1-tq^2)(1-wq^3)(1-q^4)\cdots (1-q^m)} \\
&=\frac{1}{(1-tq^2)(1-wq^3)(1-q^7)(1-q^8)(1-q^{12})(1-q^{13})\cdots}.
\end{aligned}
$$
\end{prop}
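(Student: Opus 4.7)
The plan is to deduce this identity from the second Rogers-Ramanujan identity in exactly the same way as Proposition~\ref{firsttw}: multiply both sides by the rational factor
$$
\frac{(1-q^2)(1-q^3)}{(1-tq^2)(1-wq^3)}.
$$
On the product side this replaces $(1-q^2)(1-q^3)$ by $(1-tq^2)(1-wq^3)$, producing the stated right-hand side. On the sum side, for each $m\ge 3$ the numerator $(1-q^2)(1-q^3)$ cancels the corresponding factors inside $(1-q)(1-q^2)(1-q^3)\cdots(1-q^m)$ and leaves $(1-q)(1-tq^2)(1-wq^3)(1-q^4)\cdots(1-q^m)$ in the denominator, matching the tail $\sum_{m\ge 3}$ in the proposition.

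The only nontrivial step is to show that the three terms with $m=0,1,2$ transform into the stated first three terms on the right. Concretely, this amounts to the rational function identity
$$
\frac{(1-q^2)(1-q^3)}{(1-tq^2)(1-wq^3)}\left(1+\frac{q^2}{1-q}+\frac{q^6}{(1-q)(1-q^2)}\right)=1+\frac{q^2(t+wq+t^2q^2)}{1-wq^3}+\frac{q^6(q+q^2+t^3)}{(1-tq^2)(1-wq^3)}.
$$

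I would verify this by clearing denominators. On the left, placing the bracketed sum over $(1-q)(1-q^2)$ and using $(1-q^3)=(1-q)(1+q+q^2)$ collapses the left side to $\frac{(1+q+q^2)(1-q+q^3-q^4+q^6)}{(1-tq^2)(1-wq^3)}$, and the numerator expands to $1+q^7+q^8$. On the right, multiplying through by $(1-tq^2)(1-wq^3)$ gives
$$
(1-tq^2)(1-wq^3)+q^2(t+wq+t^2q^2)(1-tq^2)+q^6(q+q^2+t^3),
$$
and all monomials containing $t$, $w$, $tw$, or $t^3$ cancel in pairs ($\pm tq^2$, $\pm wq^3$, $\pm twq^5$, $\pm t^3q^6$), again leaving $1+q^7+q^8$. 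The two sides agree, so the rational identity holds and the proposition follows.

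The proof has no real obstacle; it is essentially a bookkeeping verification that the numerators $t+wq+t^2q^2$ and $q+q^2+t^3$ supplied in the statement produce the right cancellations. The only conceptual point worth noting is the authors' remark that these positive $t,w$-refinements are not unique: there is no naturality argument that singles out this particular numerator pair over the one in Proposition~\ref{firsttw}, so the identity really must be checked by direct expansion rather than derived from symmetry.
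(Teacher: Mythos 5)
Your proposal is correct and follows the same route the paper intends: the paper omits the proof, remarking only that the preliminary rational function identity is found experimentally and verified by computer algebra before multiplying the Rogers--Ramanujan identity by $\frac{(1-q^2)(1-q^3)}{(1-tq^2)(1-wq^3)}$. Your hand verification that both sides of the preliminary identity reduce to $\frac{1+q^7+q^8}{(1-tq^2)(1-wq^3)}$ checks out, so you have simply supplied the bookkeeping the authors left to the computer.
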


Because Propositions ~\ref{firsttw} and \ref{secondtw} have positive coefficients in the 
numerators of the sum side, distinct combinatorial versions could be given.

\section{Refinements with three or four parts}

In this section we give refinements for up to four small parts. The preliminary 
rational function identities were found experimentally and then verified by computer. 
We begin with parts $2$, $3$, and $7$.

\begin{theorem} 
\label{twvthm}
A $t,w,v$-refinement of the second Rogers-Ramanujan identity 
for part sizes $2,$ $3$ and $7$ is
$$
\begin{aligned}
1+&q^2\frac{(t+wq)}{1-tq^2}+
q^6\frac{(w^2+vq+q^2)}{(1-tq^2)(1-wq^3)}+
q^{12}\frac{(1+q+v^2q^2+vq^3+q^4+q^5+q^6)}{(1-tq^2)(1-wq^3)(1-vq^7)}\\
&+q^{20}\frac{(1+q+q^2+q^3+q^4+q^5+q^6)}{(1-tq^2)(1-wq^3)(1-q^4)(1-vq^7)}\\
&+q^{30}\frac{(1+q+q^2+q^3+q^4+q^5+q^6)}{(1-tq^2)(1-wq^3)(1-q^4)(1-q^5)(1-vq^7)}\\
&+q^{42}\frac{(1+q+q^2+q^3+q^4+q^5+q^6)}{(1-tq^2)(1-wq^3)(1-q^4)(1-q^5)(1-q^6)(1-vq^7)}\\
&+\sum_{m=7}^\infty 
\frac{q^{m(m+1)}}{(1-q)(1-tq^2)(1-wq^3)(1-q^4)(1-q^5)(1-q^6)(1-vq^7)\cdots (1-q^m)} \\
&=\frac{1}{(1-tq^2)(1-wq^3)(1-vq^7)(1-q^8)(1-q^{12})(1-q^{13})\cdots}.
\end{aligned}
$$
\end{theorem}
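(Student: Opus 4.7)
The plan is to start from Proposition~\ref{firsttw} (the already-established $t,w$-refinement of the second Rogers-Ramanujan identity for part sizes $2$ and $3$) and multiply both sides by $(1-q^7)/(1-vq^7)$. On the product side this immediately converts the factor $(1-q^7)$ in the denominator into $(1-vq^7)$, yielding the product on the right of Theorem~\ref{twvthm}. It remains to show that the sum side rearranges into the sum side stated in the theorem.

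For $m\ge 7$, the denominator of the $m$-th summand in Proposition~\ref{firsttw} already contains $(1-q^7)$, which the new factor cancels and replaces with $(1-vq^7)$; these terms therefore become the tail $\sum_{m\ge 7}$ of the theorem verbatim. For $m=4,5,6$, the denominator of the summand does not contain $(1-q^7)$, so after multiplication the numerator factor $(1-q^7)$ combines with the denominator factor $(1-q)$ to produce $(1-q^7)/(1-q)=[7]_q$; the resulting expressions are precisely the $q^{20}, q^{30}, q^{42}$ terms in Theorem~\ref{twvthm}.

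The heart of the argument is the verification of a single rational-function identity involving only the head terms: the three initial terms of Proposition~\ref{firsttw} (multiplied by $(1-q^7)/(1-vq^7)$), together with the redistributed $m=3$ contribution $q^{12}[7]_q/[(1-tq^2)(1-wq^3)(1-vq^7)]$, must equal the four head terms listed in Theorem~\ref{twvthm}. Clearing the common denominator $(1-tq^2)(1-wq^3)(1-vq^7)$ reduces this to a polynomial identity in $q,t,w,v$. The left-hand side is independent of $v$, so one checks that the $v$-dependent contributions on the right cancel when grouped by powers of $(1-v)$ and that the remaining $v$-free terms match. This is a finite, mechanical expansion that can be done by hand or, as the authors indicate, by computer algebra.

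The main obstacle is not the verification itself, which is routine, but the \emph{discovery} of the particular numerator polynomials $w^2+vq+q^2$ and $1+q+v^2q^2+vq^3+q^4+q^5+q^6$. Positivity together with the necessary specializations ($v=1$ must recover Proposition~\ref{firsttw}, and $t=w=v=1$ must recover the untouched head of the second Rogers-Ramanujan identity) constrain these polynomials but, as the authors remark after Proposition~\ref{secondtw}, do not determine them uniquely; they were found experimentally. Once the correct form is in hand, the proof is an elementary algebraic verification.
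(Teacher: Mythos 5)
Your proposal is correct and takes essentially the same route as the paper: the authors' (largely implicit) proof is to multiply the Rogers-Ramanujan identity by the appropriate rational function and verify the experimentally discovered head identity by computer algebra, and your argument is exactly this, organized incrementally by inserting $(1-q^7)/(1-vq^7)$ into Proposition~\ref{firsttw} -- the ``consecutive insertion'' viewpoint the paper itself mentions right after the theorem. Your reduction of the head identity to a finite polynomial check (the $v$-dependent terms cancel because $(1-tq^2)(1-wq^3)+q^2(t+wq)(1-wq^3)+q^6(w^2+q^2)=1+q^8$, and the $v$-free parts agree) is a correct hand verification of what the paper delegates to the computer.
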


Theorem~\ref{twvthm} could be considered as an expansion to insert 
$1-tq^2$, $1-wq^3$, and $1-vq^7$ 
consecutively into the sum side of the second Rogers-Ramanujan identity,
replacing $1-q$, $1-q^2$, and $1-q^3.$ 

Because
$$
q^2\frac{t + wq}{1 - tq^2}+q^6
\frac{(w^2 + vq + q^2)}{(1 - tq^2)(1 - wq^3)}
= 
q^2\frac{(t+wq +t^2q^2)}{1 - wq^3} + 
  q^6\frac{(vq + q^2 + t^3)}{(1 - tq^2)(1 - wq^3)},
$$
Theorem~\ref{twvthm} can be rewritten in a form that inserts 
$1-wq^3$, $1-tq^2$,  and $1-vq^7$ consecutively. 

Also
$$
\begin{aligned}
&q^2\frac{(t+wq)}{1-tq^2}+
q^6\frac{(w^2+vq+q^2)}{(1-tq^2)(1-wq^3)}+
q^{12}\frac{(1+q+v^2q^2+vq^3+q^4+q^5+q^6)}{(1-tq^2)(1-wq^3)(1-vq^7)}=\\
&q^2\frac{(t+qw+q^2t^2+q^3tw+q^4t^3+q^5v+q^6)}{1-vq^7}\\
+&q^6\frac{(w^2+qt^2w+q^2t^4+w^3q^3+tq^4+wq^5+w^4q^6)}{(1-tq^2)(1-vq^7)}\\
+&q^{12}\frac{(1+q+q^2w^2+q^3w^5+q^4+q^5+q^6)}{(1-tq^2)(1-wq^3)(1-vq^7)}
\end{aligned}
$$
which gives an insertion ordering of $1-vq^7$, $1-tq^2$, and $1-wq^3$ consecutively.
Each of these three orderings for $t,w,v$ refinements have positive expansions and distinct combinatorial intepretations. 

We give the interpretation which corresponds to Theorem~\ref{twvthm}. 

\begin{cor} 
\label{firstbigcomb}
The number of partitions of $N\ge 1$ into parts congruent to $2$ or $3$ 
modulo 5 with $k$ 2's, $j$ 3's and $\ell$ 7's is equal to the number of partitions 
$\lambda\in {\sf{Diff_2}}^*$ of $N$ such that 
\begin{enumerate}
\item if $\lambda$ has at least seven parts, $col^*(\lambda)$ has $k$ 2's, $j$ 3's and $\ell$ 7's,
\item if $\lambda$ has four, five or six parts, 
$col^*(\lambda)$ has $k$ 2's, $j$ 3's and between $7\ell$ and $7\ell+6$ 1's, 
\item if $\lambda$ has three parts, $col^*(\lambda)$ has $k$ 2's, $j$ 3's and either 
$7\ell,$ $7\ell+1$, $7\ell-12$, $7\ell-4$, $7\ell+4$, $7\ell+5$, or $7\ell+6$ 1's,
\item if $\lambda$ has two parts,
\begin{enumerate}
\item $col^*(\lambda)=2^{k}1^{3(j-2)}$ or $2^k1^{3j+2}$ if $\ell=0,$  or 
\item $col^*(\lambda)=2^{k}1^{3j+1}$ if $\ell=1$,
\end{enumerate}
\item if $\lambda$ has one part, 
\begin{enumerate}
\item $col^*(\lambda)=1^{2k-2}$, if $k\ge 1$, and $j=\ell=0,$
\item $col^*(\lambda)=1^{2k+1}$, if $k\ge 0$, and $j=1$ and $\ell=0.$
\end{enumerate}
\end{enumerate}
\end{cor}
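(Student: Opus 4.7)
The plan is to extract the coefficient of $t^k w^j v^\ell q^N$ from both sides of Theorem~\ref{twvthm} and match the sum-side pieces against the five cases of the corollary. On the product side, the factors $(1-tq^2)(1-wq^3)(1-vq^7)$ in the denominator directly track the multiplicities of $2$, $3$, and $7$ in a partition into parts $\equiv 2,3\pmod 5$, so the left-hand count is immediate. On the sum side, I partition the terms by the number $m$ of parts of $\lambda \in {\sf{Diff_2}}^*$: the constant $1$ gives $m=0$; the $q^{k(k+1)}$-term gives $m=k$ for $1\le k\le 6$; and the infinite sum gives $m\ge 7$.

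The key bookkeeping is the standard bijection $\lambda \leftrightarrow col^*(\lambda)$ between partitions in ${\sf{Diff_2}}^*$ with exactly $m$ parts and partitions whose largest part is at most $m$, with $|\lambda|=|col^*(\lambda)|+m(m+1)$. Under this bijection a denominator factor $1/(1-xq^i)$ with $x\in\{t,w,v,1\}$ tracks the number of $i$'s in $col^*(\lambda)$. Each case of the corollary then becomes a short identification: for $m=1$, the two pieces of $q^2(t+wq)/(1-tq^2)$ produce single parts $2k$ and $2k+3$ with $col^*=1^{2k-2}$ and $1^{2k+1}$, matching case (5); for $m=2$, $col^*(\lambda)=2^{c_2}1^{c_1}$ and the three monomials $w^2$, $vq$, $q^2$ in the numerator produce the three 1's-counts of case (4); for $m\in\{4,5,6\}$ the numerator $[7]_q$ convolved with $1/(1-vq^7)$ collapses to $7\ell+i$ with $0\le i\le 6$, giving case (2); for $m\ge 7$ the denominator factors match the multiplicities in $col^*$ directly, giving case (1).

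The main obstacle is the $m=3$ case: I must verify that expanding $1/(1-vq^7)$ against the numerator $1+q+v^2q^2+vq^3+q^4+q^5+q^6$ produces precisely the seven admissible 1's-counts $7\ell+i$ with $i\in\{-12,-4,0,1,4,5,6\}$ listed in case (3). The negative shifts $7\ell-4$ and $7\ell-12$ arise from the $vq^3$ and $v^2q^2$ terms and require $\ell\ge 1$ and $\ell\ge 2$ respectively; the remaining five values come from the $v^0$ portion of the numerator. Once this expansion is recorded, and one observes that the $(1-q^4)(1-q^5)(1-q^6)$ factors in the $m\in\{4,5,6\}$ terms are inert with respect to the $t,w,v$-weights since $col^*(\lambda)$ has no parts exceeding $m$, all five cases align and the corollary follows.
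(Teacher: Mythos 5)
Your proposal is correct and follows essentially the same route as the paper: extract the coefficient of $q^Nt^kw^jv^\ell$ from Theorem~\ref{twvthm}, match each sum-side term to the number of parts of $\lambda$, and interpret the modified numerator over $1-vq^7$ as the generating function for the $1$'s of $col^*(\lambda)$. Your explicit expansion of the three-part numerator (with $vq^3\mapsto 7\ell-4$ and $v^2q^2\mapsto 7\ell-12$) is exactly the verification the paper leaves implicit.
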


\begin{proof} We need to interpret the coefficient of $q^Nt^kw^jv^\ell$ in the 
sum side of Theorem~\ref{twvthm}.

If $\lambda$ has at least seven parts, the sum side of Theorem~\ref{twvthm} 
has denominator factors of $1-tq^2$, $1-wq^3$, $1-vq^7$ which directly become parts in $col^*(\lambda)$.

If $\lambda$ has between four and six parts, the factor $1/(1-q)$ in the sum side, 
which represents the $1$'s 
in $col^*(\lambda)$, has been replaced by 
$$
\frac{(1+q+q^2+q^3+q^4+q^5+q^6)}{1-vq^7}.
$$ 
So we have the term $v^\ell$ when 
$col^*(\lambda)$ has between $7\ell$ and $7\ell+6$ 1's.

If $\lambda$ has three parts, the factor $1/(1-q)$ in the sum side, 
which represents the $1$'s, has been replaced by 
$$
\frac{(1+q+v^2q^2+vq^3+q^4+q^5+q^6)}{1-vq^7}.
$$ 
So we have the 
term $v^\ell$ when $col^*(\lambda)$ has  $7\ell,$ $7\ell+1$, 
$7\ell-12$, $7\ell-4$, $7\ell+4$, $7\ell+5$, or $7\ell+6$ 1's.

The cases when $\lambda$ has one or two parts are done similarly.
\end{proof}

\begin{exam} If $N=22$ Corollary~\ref{firstbigcomb} induces a bijection, which is given below.
$$
\begin{array}{c|l|l|l}
\underline{\mu\in 2,3\  mod \ 5 }&\underline{\lambda\in {\sf{Diff_2}}^*}& \underline{col^*(\lambda)}&
\underline{(k,j,\ell)}\\
(22)&(10,6,4,2)&(4,1,1)&(0,0,0)\\
(18,2^2)&(14,6,2)&(2^2,1^6)&(2,0,0)\\
(17,3,2)&(13,6,3)&(3,2,1^5)&(1,1,0)\\
(13,7,2)&(15,5,2)&(2,1^8)&(1,0,1)\\
(13,3^3)&(10,7,5)&(3^3,1)&(0,3,0)\\
(13,3,2^3)&(11,8,3)&(3,2^3,1)&(3,1,0)\\
(12,8,2)&(9,7,4,2)&(4,2)&(1,0,0)\\
(12,7,3)&(14,5,3)&(3,1^7)&(0,1,1)\\
(12,3^2,2^2)&(10,8,4)&(3^2,2^2)&(2,2,0)\\
(12,2^5)&(11,9,2)&(2^5)&(5,0,0)\\
(8^2,3^2)&(12,6,4)&(3^2,1^4)&(0,2,0)\\
(8^2,2^3)&(13,7,2)&(2^3,1^4)&(3,0,0)\\
(8,7^2)&(16,4,2)&(1^{10})&(0,0,2)\\
(8,7,3,2^2)&(12,7,3)&(3,2^2,1^3)&(2,1,1)\\
(8,3^4,2)&(19,3)&(2,1^{14})&(1,4,0)\\
(8,2^7)&(13,9)&(2^7,1^2)&(7,0,0)\\
(7^2,3^2,2)&(11,7,4)&(3^2,2,1^2)&(1,2,2)\\
(7^2,2^4)&(12,8,2)&(2^4,1^2)&(4,0,2)\\
(7,3^5)&(20,2)&(1^{16})&(0,5,1)\\
(7,3^3,2^3)&(17,5)&(2^3,1^{10})&(3,3,1)\\
(7,3,2^6)&(14,8)&(2^6,1^4)&(6,1,1)\\
(3^6,2^2)&(18,4)&(2^2,1^{12})&(2,6,0)\\
(3^4,2^5)&(15,7)&(2^5,1^6)&(5,4,0)\\
(3^2,2^8)&(12,10)&(2^8)&(8,2,0)\\
(2^{11})&(22)&(1^{20})&(11,0,0)\\
\end{array}
$$
\end{exam}

The most general result of this type that we found has four independent weights,
for the $2$'s, $3$'s, $7$'s, and $8$'s. 

\begin{theorem} 
\label{twvx23theorem}
A $t,w,v,x$-refinement of the second Rogers-Ramanujan identity for 
part sizes $2,$ $3,$ $7$ and $8$ is
$$
\begin{aligned}
1&+q^2\frac{(t+wq)}{1-tq^2}+
q^6\frac{(w^2+vq+xq^2)}{(1-tq^2)(1-wq^3)}+
q^{12}\frac{(1+q+v^2q^2+xvq^3+x^2q^4+q^5+q^6)}{(1-tq^2)(1-wq^3)(1-vq^7)}\\
&+q^{20}\frac{(x+xq+q^2+q^3+(1+x^3)q^4+(1+x)q^5+(1+x)q^6+q^7+q^8+q^9+q^{10})}
{(1-tq^2)(1-wq^3)(1-xq^8)(1-vq^7)}\\
&+q^{30}\frac{(1+q+q^2+q^3+q^4+q^5+q^6)(1+q^4)}
{(1-tq^2)(1-wq^3)(1-xq^8)(1-q^5)(1-vq^7)}\\
&+q^{42}\frac{(1+q+q^2+q^3+q^4+q^5+q^6)(1+q^4)}
{(1-tq^2)(1-wq^3)(1-xq^8)(1-q^5)(1-q^6)(1-vq^7)}\\
&+q^{56}\frac{(1+q^4)}
{(1-q)(1-tq^2)(1-wq^3)(1-xq^8)(1-q^5)(1-q^6)(1-vq^7)}+\\
&\sum_{m=8}^\infty 
\frac{q^{m(m+1)}}{(1-q)(1-tq^2)(1-wq^3)(1-q^4)(1-q^5)(1-q^6)(1-vq^7)
(1-xq^8)\cdots (1-q^m)} \\
&=\frac{1}{(1-tq^2)(1-wq^3)(1-vq^7)(1-xq^8)(1-q^{12})(1-q^{13})\cdots}.
\end{aligned}
$$
\end{theorem}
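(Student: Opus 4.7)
The plan is to multiply the second Rogers-Ramanujan identity by the rational prefactor
$$\frac{(1-q^2)(1-q^3)(1-q^7)(1-q^8)}{(1-tq^2)(1-wq^3)(1-vq^7)(1-xq^8)},$$
so that the product side transforms directly into the right-hand side of the theorem, and then rearrange the resulting left-hand side into the displayed sum. The tail for $m \ge 8$ matches term-by-term automatically: the $m$-th summand of the original identity already contains $(1-q^2)(1-q^3)(1-q^7)(1-q^8)$ in its denominator, so the prefactor simply substitutes $1-tq^2$, $1-wq^3$, $1-vq^7$, $1-xq^8$ for these four factors.

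For $m = 5, 6, 7$, the prefactor performs a clean closed-form simplification. For instance, at $m = 5$ one uses $(1-q^7)/(1-q) = [7]_q$ and $(1-q^8)/(1-q^4) = 1+q^4$ to obtain exactly the $q^{30}$ term claimed in the theorem, with the factored numerator $[7]_q(1+q^4)$ visible in the statement. The cases $m = 6, 7$ are analogous and produce the $q^{42}$ and $q^{56}$ terms.

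The remaining content is the equality between the five prefactor-weighted terms for $m = 0, 1, 2, 3, 4$ and the five explicitly displayed initial terms of the theorem, namely $1$ together with the $q^2$, $q^6$, $q^{12}$, and $q^{20}$ terms. After clearing the common denominator $(1-tq^2)(1-wq^3)(1-vq^7)(1-xq^8)$, this collapses to a single polynomial identity in $q, t, w, v, x$, which can be checked mechanically; the authors indicate this was verified by computer algebra.

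The main obstacle is not the algebraic verification but the presentation: the splitting of the small-$m$ contribution among the partial denominators $(1-tq^2)$, $(1-tq^2)(1-wq^3)$, $(1-tq^2)(1-wq^3)(1-vq^7)$, and $(1-tq^2)(1-wq^3)(1-vq^7)(1-xq^8)$ is highly non-unique, and the particular form given in the theorem is engineered so that every numerator polynomial has non-negative coefficients. This positivity, which is what makes the combinatorial interpretation of \S 3 possible, has to be read off by inspection from the exhibited numerators; once the splitting is written down, the verification itself is routine polynomial algebra.
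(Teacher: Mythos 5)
Your proposal is correct and takes essentially the same route as the paper: the authors state (in \S 4 and again in \S 6) that these refinements are obtained by multiplying the Rogers--Ramanujan identity by the appropriate rational prefactor and verifying the resulting preliminary rational-function identity for the finitely many initial terms by computer algebra, exactly as you describe. Your bookkeeping for the tail $m\ge 8$ and the $(1-q^7)(1-q^8)/\bigl((1-q)(1-q^4)\bigr)=[7]_q(1+q^4)$ simplification for $m=5,6,7$ is the correct reduction to that finite check.
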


An analogous result with 4 parameters for the first Rogers-Ramanujan
identity is
\begin{theorem} 
\label{twvx14thm}
A $t,w,v,x$-refinement of the first Rogers-Ramanujan
identity for part sizes $1,$ $4,$ $6$ and $9$ is
$$
\begin{aligned}
1&+q\frac{t}{1-tq}+
q^4\frac{(w+vq^2)}{(1-tq)(1-wq^4)}+
q^{9}\frac{(x+q^2+v^2q^3+q^5)}{(1-tq)(1-wq^4)(1-vq^6)}\\
&+q^{16}\frac{(1+x^2q^2+q^3+xq^4+q^5+q^6 +xq^7+q^8+q^{10})}
{(1-tq)(1-wq^4)(1-vq^6)(1-xq^9)}\\
&+q^{25}\frac{(1+q^2 +q^3 +q^4 +q^5 +q^6 +q^7+q^8 +q^{10})}
{(1-tq)(1-wq^4)(1-vq^6)(1-xq^9)(1-q^5)}\\
&+q^{36}\frac{(1+q^2 +q^3 +q^4 +q^5 +q^6 +q^7+q^8 +q^{10})}
{(1-tq)(1-wq^4)(1-vq^6)(1-xq^9)(1-q^5)(1-q^6)}\\
&+q^{49}\frac{(1+q^2 +q^3 +q^4 +q^5 +q^6 +q^7+q^8 +q^{10})}
{(1-tq)(1-wq^4)(1-vq^6)(1-xq^9)(1-q^5)(1-q^6)(1-q^7)}\\
&+q^{64}\frac{(1+q^2 +q^3 +q^4 +q^5 +q^6 +q^7+q^8 +q^{10})}
{(1-tq)(1-wq^4)(1-vq^6)(1-xq^9)(1-q^5)(1-q^6)(1-q^7)(1-q^8)}\\
+&\sum_{m=9}^\infty 
\frac{q^{m^2}}{(1-tq^1)(1-q^2)(1-q^3)(1-wq^4)(1-q^5)(1-vq^6)(1-q^7)
(1-q^8)(1-xq^9)\cdots (1-q^m)} \\
&=\frac{1}{(1-tq^1)(1-wq^4)(1-vq^6)(1-xq^9)(1-q^{11})(1-q^{14})\cdots}.
\end{aligned}
$$
\end{theorem}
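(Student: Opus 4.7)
The plan is to derive Theorem~\ref{twvx14thm} from the first Rogers-Ramanujan identity by multiplying both sides by the rational function
$$R(q,t,w,v,x) \;=\; \frac{(1-q)(1-q^4)(1-q^6)(1-q^9)}{(1-tq)(1-wq^4)(1-vq^6)(1-xq^9)}.$$
On the product side, $(1-q),(1-q^4),(1-q^6),(1-q^9)$ are precisely the first four factors of $\prod_{k\ge 0}(1-q^{5k+1})(1-q^{5k+4})$, so multiplication by $R$ yields exactly the weighted product claimed. On the sum side I would split the series $\sum_{m\ge 0}q^{m^2}/(1-q)(1-q^2)\cdots(1-q^m)$ at $m=9$: for every $m\ge 9$ the four factors $(1-q),(1-q^4),(1-q^6),(1-q^9)$ already occur in the denominator $(1-q)(1-q^2)\cdots(1-q^m)$, so multiplying by $R$ simply deforms them to $(1-tq),(1-wq^4),(1-vq^6),(1-xq^9)$ and reproduces the tail $\sum_{m\ge 9}$ of the theorem verbatim.

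What remains is the finite rational function identity
$$R\cdot\sum_{m=0}^{8}\frac{q^{m^2}}{(1-q)(1-q^2)\cdots(1-q^m)} \;=\; \text{(first nine explicit blocks on the LHS of Theorem~\ref{twvx14thm})},$$
namely the terms of degree $q^0,q^1,q^4,q^9,q^{16},q^{25},q^{36},q^{49},q^{64}$. After clearing the common denominator, which has degree bounded by a fixed constant, this reduces to a polynomial identity in $q,t,w,v,x$ that can be confirmed mechanically by a computer algebra system. The paper already notes that the preliminary rational function identities of this kind ``were found experimentally and then verified by computer,'' so that is the route I would follow.

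The genuine obstacle is not verification but \emph{discovery}: one must guess the explicit numerator polynomials --- for instance $x+q^2+v^2q^3+q^5$ at $q^9$ and the nine-term expression $1+x^2q^2+q^3+xq^4+q^5+q^6+xq^7+q^8+q^{10}$ at $q^{16}$ --- in a form with manifestly non-negative coefficients, as the paper emphasizes throughout. There is no canonical choice here: the discussion following Theorem~\ref{twvthm} already shows that even the three-parameter version admits several distinct positive forms, so extracting the particular presentation in Theorem~\ref{twvx14thm} requires experimentation. A potentially more systematic route would be to iterate the single-part-insertion identity preceding Theorem~\ref{partM} in the order $(1-tq),(1-wq^4),(1-vq^6),(1-xq^9)$, but because the low-degree numerators couple the weight variables together in an essential way, such an iteration would most likely produce a different (though equivalent) positive expansion rather than the one stated.
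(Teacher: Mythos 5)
Your proposal is correct and follows essentially the same route as the paper: the authors obtain Theorem~\ref{twvx14thm} by multiplying the first Rogers--Ramanujan identity by $\frac{(1-q)(1-q^4)(1-q^6)(1-q^9)}{(1-tq)(1-wq^4)(1-vq^6)(1-xq^9)}$, absorbing the factors into the tail $m\ge 9$, and verifying the residual finite rational function identity (whose positive numerators were found experimentally) by computer algebra, exactly as described in their Section 4 and the Remarks. Your observation that the real difficulty lies in discovering the particular manifestly non-negative numerators, not in the verification, also matches the authors' own assessment.
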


A combinatorial interpretation of Theorem~\ref{twvx14thm} may be given, but 
for simplicity we provide one for the case $x=1.$
\begin{cor} 
\label{bigcomb}
The number of partitions of $N\ge 1$ into parts congruent to $1$ or $4$ 
modulo 5 
with $k$ 1's, $j$ 4's and $\ell$ 6's is equal to the number of partitions 
$\lambda\in {\sf{Diff_2}}$ of $N$ such that 
\begin{enumerate}
\item if $\lambda$ has at least six parts, $col(\lambda)$ has $k$ 1's, $j$ 4's and $\ell$ 6's,
\item if $\lambda$ has four or five parts, 
$col(\lambda)$ has $k$ 1's, $j$ 4's and $2\ell$ or $2\ell+1$ 3's, 
\item if $\lambda$ has three parts,
\begin{enumerate}
\item $col(\lambda)=3^{2\ell}2^{2j}1^k,$  or 
\item $col(\lambda)=3^{2\ell}2^{2j+1}1^k,$ or
\item $col(\lambda)=3^{2\ell-3}2^{2j}1^k,$ $\ell\ge 2,$ or 
\item $col(\lambda)=3^{2\ell+1}2^{2j+1}1^k.$  
\end{enumerate}
\item if $\lambda$ has two parts,
\begin{enumerate}
\item $col(\lambda)=2^{2j-2}1^k,$  $j\ge 1,$ and $\ell=0,$  or 
\item $col(\lambda)=2^{2j+1}1^k$  and $\ell=1$,
\end{enumerate}
\item if $\lambda$ has one part, $col(\lambda)=1^{k-1},$  $k\ge 1$, and $j=\ell=0.$
\end{enumerate}
\end{cor}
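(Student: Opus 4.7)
The plan is to extract the coefficient of $q^N t^k w^j v^\ell$ from both sides of Theorem~\ref{twvx14thm} with $x = 1$, and match them by classifying on the number $m$ of parts of $\lambda \in {\sf{Diff_2}}$. When $\lambda$ has $m$ parts, $col(\lambda)$ has parts of size at most $m$, and in the unweighted sum side the factor $1/(1-q^i)$ of the $m$-th summand tracks (via conjugation of $\mu=\lambda-\lambda_{\min}$) the multiplicity of $i$ in $col(\lambda)$.

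I would first dispose of the case $m \ge 6$ using the polynomial identity
$$P(q)(1-q^2)(1-q^3) = (1-q^6)(1-q^9), \qquad P(q) := 1 + q^2 + q^3 + q^4 + q^5 + q^6 + q^7 + q^8 + q^{10},$$
which for $6 \le m \le 8$ rewrites the $m$-th summand as
$$\frac{q^{m^2}}{(1-tq)(1-q^2)(1-q^3)(1-wq^4)(1-q^5)(1-vq^6)(1-q^7)\cdots(1-q^m)};$$
for $m \ge 9$ this form already appears in the theorem at $x=1$. In either case $t^k w^j v^\ell$ directly tracks the multiplicities of $1$, $4$, $6$ in $col(\lambda)$, which is case (1).

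For $m = 4, 5$, the partition $col(\lambda)$ has no $6$'s, so $v$ must track a different statistic. Using the reduced identity $P(q)(1-q^2) = (1+q^3)(1-q^9)$, I would rewrite the $m$-th summand so that the factor $1/(1-q^3)$ of the naive weighted sum side is replaced by
$$\frac{1+q^3}{1-vq^6} \;=\; \sum_{\ell \ge 0} v^\ell (q^{6\ell} + q^{6\ell + 3}),$$
which marks precisely those $col(\lambda)$'s having exactly $2\ell$ or $2\ell+1$ threes (with $2$'s, and for $m=5$ also $5$'s, unrestricted), establishing case (2).

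For $m = 1, 2, 3$ I would expand the numerator of the $m$-th summand directly, parameterizing $col(\lambda) = m^{a_m} \cdots 2^{a_2} 1^{a_1}$ with $a_i = \mu_i - \mu_{i+1}$ and $\mu_{m+1} = 0$. For $m = 3$, the four monomials $1, q^2, v^2 q^3, q^5$ in the numerator correspond to the four shapes $(a_3, a_2) = (2\ell, 2j), (2\ell, 2j+1), (2\ell-3, 2j), (2\ell+1, 2j+1)$ of sub-cases (a)--(d); the $v^2$ in the third monomial forces $\ell \ge 2$. For $m = 2$ the two monomials $w$ and $vq^2$ give sub-cases (4a) and (4b). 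For $m = 1$ the summand $qt/(1-tq) = \sum_{k \ge 1} t^k q^k$ yields case (5) immediately. The main obstacle will be the $m = 3$ bookkeeping: the $v^2$ factor shifts the $v$-degree and forces the $\ell \ge 2$ constraint, and care is required to verify that the four numerator monomials match the four $col(\lambda)$-shapes in a $(k, j, \ell, N)$-consistent way so that each valid assignment is counted exactly once.
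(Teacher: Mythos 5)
Your proposal is correct and follows essentially the same route as the paper: extract the coefficient of $q^Nt^kw^jv^\ell$ from the sum side of Theorem~\ref{twvx14thm} at $x=1$, collapse the $6\le m\le 8$ terms using the factorization $1+q^2+q^3+q^4+q^5+q^6+q^7+q^8+q^{10}=(1+q^2+q^4)(1+q^3+q^6)$ (equivalently your identity $P(q)(1-q^2)(1-q^3)=(1-q^6)(1-q^9)$), and expand the small-$m$ numerators term by term, with the four monomials $1,q^2,v^2q^3,q^5$ matching the four subcases when $m=3$. The paper's proof is merely terser, detailing only the $m\ge 6$ reduction and the $m=3$ case and deferring the rest to the proof of Corollary~\ref{firstbigcomb}.
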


\begin{proof} We need to interpret the coefficient of $q^Nt^kw^jv^\ell$ in the 
sum side of Theorem~\ref{twvx14thm}.

First note that the choice of $x=1$ allows the final sum in 
Theorem~\ref{twvx14thm} to start at $m=6$ rather than $m=9,$
because
$$
\frac{(1+q^2+q^3+q^4+q^5+q^6+q^7+q^8+q^{10})}{1-q^9}=
\frac{(1-q^6)}{(1-q^2)(1-q^3)}.
$$ 
This is the first case.

The other cases are established as in the proof of Corollary~\ref{firstbigcomb}. 
We will explain case 3, when $\lambda$ has three parts, whose corresponding term in 
Theorem~\ref{twvx14thm} is
$$
q^9 \frac{(1+q^2+v^2q^3+q^5)}{(1-tq)(1-wq^4)(1-vq^6)}.
$$
We need the interpretation for the coefficient of $q^Nt^kw^jv^\ell$ 
in the Taylor series of this rational function. The power $q^9$ 
represents removing $1+3+5$ from $\lambda$ which has three parts, and 
the remaining quotient is the generating function for 
$col(\lambda).$ It is a weighted version of 
$1/(1-q)(1-q^2)(1-q^3).$ 

The four terms in the numerator correspond to the four subcases of case 3. The 
first subcase, ``1", has 
an arbitrary number of $1$'s, $4$'s, and 6's, which may be changed 
to 1's and an even number of $2$'s and $3$'s in $col(\lambda)$. The second subcase
``$q^2$", corresponds to an odd number of $2's$. The third subcase
``$v^2q^3$" has a term $v^\ell q^{6\ell-9}$, $\ell\ge 2,$ which corresponds to 
$2\ell-3$ $3$'s. Finally the term ``$q^5$" allows an odd number of $2$'s and an odd 
number of $3$'s.
\end{proof}

\begin{exam} If $N=19$ Corollary~\ref{bigcomb} induces a bijection, which is given below.
$$
\begin{array}{l|l|l|l}
\underline{\mu\in 1,4\  mod \ 5 }&\underline{\lambda\in {\sf{Diff_2}}}& \underline{col(\lambda)}&
\underline{(k,j,\ell)}\\
(19)&(8,6,4,1)&(3)&(0,0,0)\\
(16,1^3)&(10,5,3,1)&(1^3)&(3,0,0)\\
(14,4,1)&(10,7,2)&(3,2^3,1)&(1,1,0)\\
(14,1^5)&(12,5,2)&(3,2,1^5)&(5,0,0)\\
(11,6,1^2)&(10,6,3)&(3^2,2,1^2)&(2,0,1)\\
(11,4^2)&(10,8,1)&(2^5)&(0,2,0)\\
(11,4,1^4)&(12,6,1)&(2^3,1^4)&(4,1,0)\\
(11,1^8)&(14,4,1)&(2,1^8)&(8,0,0)\\
(9^2,1)&(9,6,3,1)&(2,1)&(1,0,0)\\
(9,6,4)&(9,7,3)&(3^2,2^2)&(0,1,1)\\
(9,6,1^4)&(11,5,3)&(3^2,1^4)&(4,0,1)\\
(9,4^2,1^2)&(11,7,1)&(2^4,1^2)&(2,2,0)\\
(9,4,1^6)&(13,5,1)&(2^2,1^6)&(6,1,0)\\
(9,1^{10})&(15,3,1)&(1^{10})&(10,0,0)\\
(6^3,1)&(9,6,4)&(3^3,1)&(1,0,3)\\
(6^2,4,1^3)&(11,6,2)&(3,2^2,1^3)&(3,1,2)\\
(6^2,1^7)&(13,4,2)&(3,1^{7})&(7,0,2)\\
(6,4^3,1)&(11,8)&(2^7,1)&(1,3,1)\\
(6,4^2,1^5)&(13,6)&(2^5,1^{5})&(5,2,1)\\
(6,4,1^9)&(15,4)&(2^3,1^9)&(9,1,1)\\
(6,1^{13})&(17,2)&(2,1^{13})&(13,0,1)\\
(4^4,1^3)&(12,7)&(2^6,1^3)&(3,4,0)\\
(4^3,1^7)&(14,5)&(2^4,1^{7})&(7,3,0)\\
(4^2,1^{11})&(16,3)&(2^2,1^{11})&(11,2,0)\\
(4,1^{15})&(18,1)&(1^{15})&(15,1,0)\\
(1^{19})&(19)&(1^{18})&(19,0,0)\\
\end{array}
$$
\end{exam}

\section{Three specializations}

By specializing the weights $t$, $w$, $v$, and $x$ one may restrict or 
change the part sizes. We give three such examples in this section.

First let's take $t=v=1$ and $w=x=0$ in Theorem~\ref{twvx23theorem}, 
so no $3$'s and $8$'s are allowed. 

\begin{theorem} 
 \label{spec1}
 The number of partitions of $N\ge 1$ into parts congruent to $2$ or $3$ 
modulo $5$ with no $3$'s and no $8$'s is equal to the number of partitions 
$\lambda\in {\sf{Diff_2}}^*$ of $N$ such that
\begin{enumerate}
\item $\lambda$ has $m\ge 8$ parts, $col^*(\lambda)$ has no $3$'s and no $8$'s, 
\item  $\lambda$ has $m$ parts, $5\le m\le 7,$  $col^*(\lambda)$ has at most one $4$ 
and no $3$'s, 
\item  $\lambda$ has four parts,  and $col^*(\lambda)=3^j2^k1^\ell$ has no $4$'s,  
$j\le 2$, $\ell\equiv 2,3,4 \mod 7$,
\item  $\lambda$ has three parts,  $col^*(\lambda)=2^k1^\ell$ has no $3$'s, and $\ell\not\equiv  3,4 \mod 7,$
\item  $\lambda$ has two parts, and $col^*(\lambda)$ has exactly one $1$, 
\item  $\lambda$ has one even part.
\end{enumerate}  
\end{theorem}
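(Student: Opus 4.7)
The plan is to specialize Theorem~\ref{twvx23theorem} by setting $t=v=1$ and $w=x=0$. On the product side the substitution collapses to
$$
\frac{1}{(1-q^2)(1-q^7)(1-q^{12})(1-q^{13})(1-q^{17})(1-q^{18})\cdots},
$$
which is exactly the generating function for partitions into parts congruent to $2$ or $3$ modulo $5$ with no $3$'s and no $8$'s. It therefore suffices to read off each summand on the specialized sum side as one of the six cases (1)--(6).

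For $m\ge 8$ the substitution simply erases the factors $(1-wq^3)$ and $(1-xq^8)$, forbidding $3$'s and $8$'s in $col^*(\lambda)$, which is case (1). For $m=5,6,7$ I would use $1+q+\cdots+q^6=(1-q^7)/(1-q)$ to cancel the surviving $(1-vq^7)\mapsto (1-q^7)$ in the denominator; the remaining $(1+q^4)$ then restricts $col^*(\lambda)$ to at most one $4$, while the other denominator factors enumerate parts in $\{1,2,5,6,\ldots,m\}$, matching case (2). The tail cases $m=1,2$ reduce to $q^2/(1-q^2)$ and $q^7/(1-q^2)$, which give respectively $\lambda_1$ even (case (6)) and $col^*(\lambda)=2^k\,1$ with $k\ge 0$ (case (5)).

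The nontrivial matching is for $m=3$ and $m=4$. For $m=3$ the surviving numerator is $1+q+q^2+q^5+q^6=\sum_{r\in\{0,1,2,5,6\}}q^r$, so dividing by $1-q^7$ yields the generating function for the number $\ell$ of $1$'s in $col^*(\lambda)=2^k1^\ell$ subject to $\ell\not\equiv 3,4\pmod 7$, while $1/(1-q^2)$ tracks the $2$'s; this is case (4). For $m=4$ the $x=0$ substitution collapses the numerator to $q^2(1+q+\cdots+q^8)=q^2(1-q^9)/(1-q)$, and I would then invoke the factorization
$$
\frac{q^2(1-q^9)}{1-q}=(1+q^3+q^6)(q^2+q^3+q^4),
$$
reading $(1+q^3+q^6)$ as the generating function for $j\in\{0,1,2\}$ threes in $col^*(\lambda)=3^j2^k1^\ell$, $(q^2+q^3+q^4)/(1-q^7)$ as the $1$'s with $\ell\equiv 2,3,4\pmod 7$, and $1/(1-q^2)$ as the $2$'s; this is case (3).

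The main obstacle is recognizing the $m=4$ factorization above, since the ``$x=0$'' numerator of Theorem~\ref{twvx23theorem} is a featureless list of eleven monomials whose combinatorial content is only revealed once one sees that it splits as a product encoding independent constraints on $j$ and on $\ell\pmod 7$. Once this is in hand, every summand on the specialized sum side matches one of the six cases, and the identity with the product side is inherited from Theorem~\ref{twvx23theorem}.
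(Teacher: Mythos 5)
Your proposal is correct and follows essentially the same route as the paper: specialize $t=v=1$, $w=x=0$ in Theorem~\ref{twvx23theorem} and rewrite each summand, using $[7]_q=(1-q^7)/(1-q)$ for $5\le m\le 7$ and the factorization $q^2(1+q+\cdots+q^8)=(1+q^3+q^6)(q^2+q^3+q^4)$ for $m=4$, which is exactly the rewriting the paper uses.
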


\begin{proof} These cases are the terms on the sum side of 
Theorem~\ref{twvx23theorem} in reverse order. Let $m$ be the 
number of parts of $\lambda.$

If  $m\ge 8,$ the choice of $w=x=0$ eliminates parts $3$ and $8$ in $col^*(\lambda).$

If $5\le m\le 7$, the $m$th term can be rewritten as
$$
q^{m(m+1)} \frac{(1+q^4)}{(1-q)(1-q^2)(1-q^5)\cdots (1-q^m)},
$$
so $col^*(\lambda)$ has no $3$'s and at most one $4$.

If $m=4,$ the $m$th term can be rewritten as
$$
q^{20}\frac{(q^2+q^3+q^4)}{1-q^7}\frac{1}{1-q^2}(1+q^3+q^6)
$$
so $col^*(\lambda)=3^j2^k1^\ell $ has no $4$'s, at most two $3$'s, and $\ell\equiv 2,3,4 \mod 7.$

If $m=3,$ the $m$th term can be rewritten as 
$$
q^{12}\frac{(1+q+q^2+q^5+q^6)}{1-q^7}\frac{1}{1-q^2}
$$
which gives the stated choices. The  cases $m=2$ and $m=1$ are clear.
\end{proof}

For the second choice, take $t=w=v=x=0$ in Theorem~\ref{twvx14thm} to knock out 
the part sizes $1$,  $4$, $6$, and $9$. The first five terms on the sum 
side are polynomials of degree at most $26,$ so do not contribute if $N\ge 27.$ We use
$$
1+q^2+q^3+q^4+q^5+q^6+q^7+q^8+q^{10}=(1+q^2+q^4)(1+q^3+q^6).
$$

\begin{theorem} 
\label{spec2}
The number of partitions of $N\ge 27$ into parts congruent to $1$ or $4$ modulo 5, 
whose smallest part is at least $11$ is equal to the number of partitions 
$\lambda\in {\sf{Diff_2}}$ of $N$ such that
\begin{enumerate}
\item $\lambda$ has $m$ parts, $5\le m\le 8$, $col(\lambda)$ has at most 
two 2's, at most two 3's, no 1's and no 4's, or
\item  $\lambda$ has $m$ parts, $m\ge 9,$ $col(\lambda)$ has no $1$'s, $4$'s, $6$'s, or $9$'s.
\end{enumerate}  
\end{theorem}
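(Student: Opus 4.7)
The plan is to specialize $t=w=v=x=0$ in Theorem~\ref{twvx14thm}. On the product side the factors $(1-tq)^{-1}$, $(1-wq^4)^{-1}$, $(1-vq^6)^{-1}$, $(1-xq^9)^{-1}$ all collapse to $1$, so the right-hand side becomes the generating function for partitions into parts $\equiv 1$ or $4\pmod 5$ whose smallest part is at least $11$. I then analyze the sum side term by term, grouped by the number of parts $m$ of $\lambda$.

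First, the $m=1$ and $m=2$ terms vanish, since their numerators contain only the factors $t$ and $w+vq^2$ respectively. The $m=3$ term specializes to $q^9(q^2+q^5)=q^{11}+q^{14}$, and the $m=4$ term specializes to $q^{16}(1+q^3+q^5+q^6+q^8+q^{10})$, of degree $26$. Hence for $N\ge 27$ none of these low-$m$ polynomial pieces contributes, which is exactly why the theorem is restricted to $N\ge 27$.

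For $m\ge 9$ the specialized denominator of the $m$th term is $\prod_{k\le m,\ k\notin\{1,4,6,9\}}(1-q^k)$, so the contribution enumerates $\lambda\in {\sf{Diff_2}}$ with $m$ parts whose $col(\lambda)$ avoids parts of size $1,4,6,9$; this is case (2). For $5\le m\le 8$ the numerator is the same polynomial in each case, and the key algebraic fact is the factorization
$$1+q^2+q^3+q^4+q^5+q^6+q^7+q^8+q^{10}=(1+q^2+q^4)(1+q^3+q^6),$$
which is the identity the authors have flagged. Combined with the denominator $\prod_{k=5}^m(1-q^k)$, this generates exactly those $col(\lambda)$ with no $1$'s, no $4$'s, at most two $2$'s, at most two $3$'s, and arbitrary multiplicities of the remaining allowed parts of size $\le m$; this is case (1).

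The entire argument is a careful reading of the sum side of Theorem~\ref{twvx14thm} after the specialization, so there is no substantial obstacle: the only algebraic content is the factorization displayed above (immediate to verify), and the only bookkeeping is checking that the $m=3,4$ polynomial pieces have degree $\le 26$ and therefore drop out under the hypothesis $N\ge 27$.
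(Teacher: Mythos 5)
Your proposal is correct and is essentially identical to the paper's own (very brief) argument: specialize $t=w=v=x=0$ in Theorem~\ref{twvx14thm}, observe that the terms with $m\le 4$ specialize to polynomials of degree at most $26$ and so vanish for $N\ge 27$, and use the factorization $1+q^2+q^3+\cdots+q^8+q^{10}=(1+q^2+q^4)(1+q^3+q^6)$ to read off the conditions on $col(\lambda)$ for $5\le m\le 8$, with the $m\ge 9$ terms handled directly from the surviving denominator factors. Your term-by-term bookkeeping checks out against the paper.
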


Finally let's specialize Proposition~\ref{firsttw} to $t=1$ and $w=q^2$. 
Now the parts which were $2$ and $3$ 
modulo $5$ have no $3$'s but $5$'s are allowed. 

\begin{theorem} 
\label{spec3}
The number of partitions of $N$ into parts congruent to $2$ or $3$ modulo 5 or 
$5$'s but no $3$'s 
is equal to the number of partitions $\lambda\in {\sf{Diff_2}}^*$ of $N$ such that
\begin{enumerate}
\item $\lambda$ has one part, $\lambda\neq 3,$  or
\item  $\lambda$ has two parts, and the number of $1$'s in $col^*(\lambda)$ is 
congruent to $1$, $2$, or $4$ modulo 5, or
\item  $\lambda$ has at least three parts, $col^*(\lambda)$ has at least as many $2$'s as $3$'s.
\end{enumerate}  
\end{theorem}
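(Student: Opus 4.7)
The plan is to specialize Proposition~\ref{firsttw} at $t=1$ and $w=q^2$, and then identify the three cases of the theorem with the three non-constant summands on the resulting sum side. First I would check that the product side becomes
\[
\frac{1}{(1-q^2)(1-q^5)(1-q^7)(1-q^8)(1-q^{12})(1-q^{13})\cdots},
\]
which is exactly the generating function for the partitions counted on the left-hand side of Theorem~\ref{spec3} (parts $2$ and $5$ together with all parts $\equiv 2,3 \pmod 5$ except $3$). It therefore suffices to match each summand on the sum side with the corresponding case.

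For case (1), the second summand evaluates to $q^2(1+q^3)/(1-q^2) = q^2 + q^4 + q^5 + q^6 + q^7 + \cdots$, the generating function for one-part partitions whose unique part is $\ge 2$ and not equal to $3$. For case (2), the third summand becomes $q^6(q+q^2+q^4)/((1-q^2)(1-q^5))$. I would split this as $[q^6/(1-q^2)] \cdot [(q+q^2+q^4)/(1-q^5)]$: the first factor is the generating function for two-part partitions $\lambda=(\lambda_1,\lambda_2)\in {\sf{Diff_2}}^*$, indexed by $k = \lambda_2-2$, the number of $2$'s in $col^*(\lambda)$, while the second equals $\sum_{\ell \equiv 1,2,4 \pmod 5} q^\ell$ and tracks the number $\ell = \lambda_1-\lambda_2-2$ of $1$'s in $col^*(\lambda)$, restricting $\ell$ to the stated residues.

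Case (3) is the main piece. After specialization, each summand with $m\ge 3$ becomes $q^{m(m+1)}$ divided by $(1-q)(1-q^2)(1-q^5)(1-q^4)(1-q^5)\cdots(1-q^m)$; in effect, the original $(1-q^3)$ factor has been replaced by an extra $(1-q^5)$. The hinge of the argument is the elementary identity
\[
\sum_{a_2 \ge a_3 \ge 0} q^{2a_2 + 3a_3} = \frac{1}{(1-q^2)(1-q^5)},
\]
obtained by writing $a_2 = a_3 + b$ with $b\ge 0$. The modified denominator therefore enumerates partitions $col^*(\lambda)$ with parts at most $m$ in which the number of $2$'s is at least the number of $3$'s, with all other part multiplicities unrestricted. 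Summing over $m\ge 3$ yields case (3). The only minor obstacle I foresee is confirming that this reading is still correct when $m=3$ or $m=4$, where the unspecialized denominator contains no $(1-q^5)$ factor; this is immediate since the identity above involves only the two factors $(1-q^2)$ and $(1-q^5)$ and is independent of the remaining denominator.
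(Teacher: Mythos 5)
Your proposal is correct and follows essentially the same route as the paper: specialize Proposition~\ref{firsttw} at $t=1$, $w=q^2$, match the one-part and two-part summands to cases (1) and (2), and interpret the factor $1/((1-q^2)(1-q^5))$ in the $m\ge 3$ terms as enumerating the $2$'s and $3$'s of $col^*(\lambda)$ subject to $a_2\ge a_3$. Your substitution $a_2=a_3+b$ is exactly the paper's ``melded parts $32$'' observation made explicit, so the two arguments coincide.
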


\begin{proof} This specialization gives for the sum side of Proposition~\ref{secondtw} is
$$
\begin{aligned}
1+&q^2\left( \frac{1}{1-q}-q\right)+\frac{q^6}{1-q^2} \frac{q+q^2+q^4}{1-q^5}\\
+&\sum_{m=3}^\infty \frac{q^{m(m+1)}}{(1-q)(1-q^2)(1-q^5)(1-q^4)\cdots (1-q^m)}.
\end{aligned}
$$
If $\lambda$ has at least three parts, 
we interpret the denominator factor $(1-q^5)$ as melded parts $32.$ Then the number of 
$2$'s in $col^*(\lambda)$ must be at least as great as the number of 
$3$'s in $col^*(\lambda)$. 
\end{proof}

The specialization $t=1$, $w=q^2,$ in Proposition~\ref{secondtw} 
gives a version of Theorem~\ref{spec3} with a 
different subset of ${\sf{Diff_2}}^*.$

\section{Remarks}

Once the preliminary rational function identities are 
experimentally discovered, they are easily proven
using computer algebra. Upon 
multiplying the  Rogers-Ramanujan identities by the appropriate rational function,  
a weighted version is obtained. 
We do not have an algorithm which generates 
such identities, nor do we have a priori explanations for the crucial positivity of the 
numerator coefficients. The non-uniqueness of the identities 
adds to their difficulty. 
We do not know if a particular specialization of the weights
simplifies the numerator polynomials. 

We were led to consider such identities while 
considering a possible direct Rogers-Ramanujan bijection. 
That bijection remains elusive, see \cite{GM}, \cite{BP}.
Our Theorems indicate that parts on the modulo 5 side mostly become 
columns on the difference two side, with some number theoretic initial
cases.

\end{document}